\title{Space-time evolution of Volterra disclinations}
\author[P.~Cesana]{Pierluigi Cesana}
\address[P.~Cesana]{Institute of Mathematics for Industry, Kyushu University, 744 Motooka, Fukuoka 819-0395, Japan}
\email{cesana@math.kyushu-u.ac.jp}
\author[A.~Grillo]{Alfio Grillo}
\author[M.~Morandotti]{Marco Morandotti}
\address[A.~Grillo, M.~Morandotti, A.~Pastore]{Politecnico di Torino, Corso Duca degli Abruzzi, 24, 10129 Torino, Italy}
\email{$\{$alfio.grillo,marco.morandotti,andrea.pastore$\}$@polito.it}
\author[A.~Pastore]{Andrea Pastore}
\newcommand*{\addFileDependency}[1]{
  \typeout{(#1)}
  \@addtofilelist{#1}
  \IfFileExists{#1}{}{\typeout{No file #1.}}
}
\definecolor{orange}{rgb}{1.0, 0.49, 0.0}
\definecolor{green}{rgb}{0.0, 0.5, 0.0}
\newtheorem{theorem}{Theorem}[section]
\newtheorem{lemma}[theorem]{Lemma}
\theoremstyle{remark}
\newtheorem{remark}{Remark}
\theoremstyle{definition}
\newtheorem{definition}{Definition}[section]
\newcommand{\Bilap}{\Delta^{2}}
\newcommand{\Lap}{\Delta}
\newcommand{\defeq}{\coloneqq}
\newcommand{\xidot}{\dot{\xi}}
\newcommand{\Gcal}{\mathcal{G}}
\newcommand{\Hhat}{\hat{H}}
\begin{document}

\maketitle

\begin{abstract}
  The dynamics of a system of particles subject to a 4th order potential field modeling the space-time evolution of wedge disclinations is studied, focusing on finite systems of disclinations within a circular domain.
  Existence theorems for the trajectories of these disclinations are presented, considering both the 
  dynamics without predefined preferred directions of motion in an isotropic medium and the dynamics in which the disclinations move parallel to predefined directions, modeling a crystalline material.
  The analysis is illustrated with a number of numerical solutions to demonstrate various relevant configurations.
\end{abstract}

 \noindent\textbf{Keywords:} Disclinations, material defects, energy dissipation, dynamics of defects, differential inclusions.

\noindent
\textbf{2020 AMS Classification:}
 70F40, 
 74B99, 
 49J10, 
 34A60. 

%




%


\section{Introduction} \label{sec_Introduction}
Wedge disclinations, defined by the corresponding Frank angle, are asymmetries observed at the lattice level in various materials and systems relevant to technological applications, such as elastic crystals \cite{Orowan1934,Polanyi1934,Taylor1934}, shape memory alloys \cite{I19,IHM13,ILTH17,IS20}, 
metal alloys \cite{ET67,TE68}, graphene \cite{Banhart11,Yang18}, and more. Alongside dislocations (translational defects), disclinations represent the classical lattice rotational   irregularities envisioned by Volterra \cite{V07}.

The investigation and modeling of disclinations, particularly regarding their formation, motion, and interaction with other defects, are crucial for understanding the structure-property relationships in the corresponding materials. For instance, in graphene wedge disclinations with opposite angles tend to attract and interact, forming disclination dipoles \cite{RRK2018}, 
Stone--Wales defects \cite{KVV2016}, 
and even more complex configurations commonly referred to as ``flowers'' \cite{LBSL2013}, 
which dramatically influence electrical/magnetic/mechanical properties and chemical reactivity. In metal alloys and elastic crystals, moving disclination dipoles create shear similar to that of deformation twins, in the context of accommodating plastic deformations \cite{Smirnov2022,TOKUZUMI2023118785},
thereby affecting mechanical behavior, such as stress concentration in the system.

While dislocation dynamics has 
been extensively studied by 
experimentalists (see \cite{AKHONDZADEH2023118851,Dorn1965,ROBACH20061679} and the references therein), modelists \cite{ACHARYA2001761,ACHARYA2010766,ZHANG2015145}
and theoreticians \cite{ADLGP2017,Blass2015a,Cermelli1999a,GvMPS2020,HM2017,MPS2017,PS2021,vMM2019,vMP2024} (see also \cite{Yavari_ARMA} for a geometric approach), the investigation of disclination dynamics has 
received less attention.
We refer to \cite{BCH15,CH20} for a probabilistic model of the space-time evolution of self-similar martensitic microstructures, which are systems known to be associated with lattice mismatches, particularly disclinations.
A general mathematical theory of moving disclinations in linear elasticity has been presented in 
\cite{KdW1977b,KdW1977a},
where equations of motion are expressed in terms of the total (i.e., elastic + plastic) displacement
(see also the 
references to earlier works contained therein, as well as 
\cite{RV1983}, 
and \cite{AF2015,FRESSENGEAS20113499,Taupin2013}
). For a nonlinear approach, see, e.g., \cite{Yavari_MMS}.
In \cite{K1979}, 
forces acting on defects interacting with external fields of mechanical stresses are calculated. It is found that wedge disclinations do interact with the surface of the elastic body and tend to get ejected from the domain boundary.
To the best of our knowledge a systematic analysis of these equations is not available as of yet.

Clarifying the dynamics of disclinations is essential for understanding microstructure evolution and grain boundary migration. In the context of metallurgy, it is crucial for developing better metal alloys in terms of strength, ductility, toughness, and more. Additionally, it allows for the control, inverse design, and optimization of graphene structures for specific applications \cite{AJE2021}.

The goal of this paper is to get insight from a simple conceptual model of 
the time-space evolution of a system of disclinations, and to study some of the properties of such a system, 
e.g., dynamic equilibrium and stability, through the solution of a few selected benchmarks.

We follow the mathematical modeling presented in \cite{Cesana2024a}, 
which is based on the mechanical modeling work of \cite{W68,SN88,V07}. 
We operate under the assumption of linearized kinematics in a planar strain regime within a homogeneous and isotropic continuum 
medium, in which 
wedge disclinations are described by concentrated singularities that are point sources of kinematic incompatibility. Precisely, each wedge disclination is represented by a Dirac delta function located at point $\xi_{i}$, with a multiplicative factor $s_i \in \mathbb{R} \setminus \{0\} $ denoting  
the Frank angle, which corresponds to the angular mismatch. With some abuse of notation, we will refer to the Frank angles as (signed) charges, in analogy with electrostatics.

In conditions of mechanical equilibrium, we describe the mechanical stress using the Airy potential function of the system \cite{Cesana2024a,Michell,V07}. Under these circumstances, and in analogy with the second-order case of the electrostatic potential, we find that the dynamics of a finite system of wedge disclinations is modeled by the space-time evolution of a finite collection of charges under the effect of the fourth-order Airy potential field generated by the disclinations themselves. In analogy with \cite{Cermelli1999a}, we assume that the dynamics is of dissipative type and 
driven by the negative gradient of the elastic energy of the mechanical system of disclinations \cite{Blass2015a,BlassMorandotti17}.


\textcolor{black}{Although our mechanical modeling approach is based on the 
assumptions of isotropic elasticity and linearized kinematics in the plane strain regime, the resulting time-space dynamics for the disclinations is highly non-linear. Consequently, the evolution and interactions of the disclinations are non-trivial, even in the case of simple systems. Our analysis can effectively capture fundamental dynamical mechanisms, such as the movement of a single disclination driven by forces towards the domain boundary, dipole and tripole interactions, and internal reorganization within large (finite) systems of disclinations. These mechanisms serve as benchmarks for describing more complex configurations of disclinations in realistic scenarios.}

The outline of the paper is as follows.
After introducing the main notation and recalling the mechanical modeling of disclinations in Section \ref{sec_ProblemDescription}, we formulate the mechanical energy for finite systems of disclinations via \emph{superposition} in Section \ref{sec_EnergiesSystemDisclinations}. Section \ref{sec_DisclinationDynamics} contains our main results on the dynamics in various configurations, including constrained dynamics; 
existence theorems, exact solutions for the case of a single disclination in a circular domain, and exact limits for disclination dipoles are illustrated. 
In Section \ref{sec_glideDirections}, we consider the case in which the dynamics follows preferred directions. Finally, Section \ref{sec_simulations} presents numerical simulations and describes some remarkable examples.

\section{Problem description}
\label{sec_ProblemDescription}
In this section, we briefly recall the main ingredients needed to establish the mechanical setting developed by Cesana et al.~\cite{Cesana2024a} to study the dynamics of disclinations in \emph{plane strain} through a variational theory of concentrated defects in elastic media undergoing infinitesimal deformations. 

Let us consider an open, bounded, and simply connected set $\Omega\subset \mathbb{R}^{2}$, and let $v\in H^{2}_{0}(\Omega)$ denote the Airy potential function describing the plane strain linear elastic material under study. Then, the elastic energy of the medium in terms of the Airy potential function reads \cite{Cesana2024a}
\begin{linenomath}
\begin{align}
\mathcal{W}(v;\Omega) \coloneqq \frac{1}{2}\frac{1+\nu}{E}\int_{\Omega}(|\nabla^{2} v|^{2}-\nu(\Delta v)^{2}) = \frac{1}{2}\frac{1-\nu^{2}}{E}\int_{\Omega} (\Lap v)^{2},
\label{eq_ElasticEnergyFunctional}
\end{align}
\end{linenomath}
where $E$ and $\nu$ are the material's Young modulus and Poisson ratio, respectively, $\nabla^{2}v$ is the Hessian of $v$, and $\Delta v$ is the Laplacian of $v$. Note that, granted the hypothesis of homogeneous material (i.e., with $\nu$ constant), the second equality holds true for functions $v$ in $H^{2}_{0}(\Omega)$, because, owing to the properties of such functions, it can be shown that $\int_{\Omega}|\nabla^{2}v|^{2}$ is equal to $\int_{\Omega}(\Delta v)^{2}$. We recall that the Airy potential permits to express the components of Cauchy stress in a way that satisfies automatically the mechanical equilibrium equation. 

Following 
\cite{Cesana2024a}, to account for punctual wedge disclinations in $\Omega$, we introduce the \emph{incompatibility measure} $\theta$, which describes the 
kinematic incompatibility associated with the \emph{rotational} symmetry breaking 
of the material at the point in which a wedge disclination appears. To define such disclinations properly, we use the concept of \emph{Frank angle} \cite{RV92}, i.e., the angle that defines the aperture of a wedge-type defect of the material lattice from its ideal, regular, arrangement. In this regard, we consider the set $\mathcal{S}\subset \mathbb{R}\setminus\{0\}$ of admissible, or possible, values taken by the Frank angles associated with each wedge disclinations, and we give the following: 

\begin{definition}[Disclination --- adapted from \cite{Cesana2024a}]
A disclination in $\Omega$ 
is a pair $d \equiv (s, \xi) \in \mathcal{S}\times \Omega$, where $s\in\mathcal{S}$ is the Frank angle associated with the disclination, and $\xi \in \Omega$ is the point in which the disclination is placed.
\end{definition}

If we assume the presence of $N\in\mathbb{N}$ disclinations in $\Omega$, denoted by $d_{1},\ldots, d_{N}$, so that $d_{k} = (s_{k},\xi_{k})$, with $k = 1,\ldots,N$
, the incompatibility measure is  \cite{Cesana2024a,RV92,SN88}
\begin{linenomath}
    \begin{align}
        \theta \equiv \sum_{k=1}^{N}\theta_{k} \coloneqq  \sum_{k=1}^{N}s_{k}\delta_{\xi_{k}}, &&\theta_{k} \coloneqq s_{k}\delta_{\xi_{k}},
        \label{eq_incompatibilityMeasure}
    \end{align}
\end{linenomath}
where, for each $k = 1,\ldots, N$, $\delta_{\xi_{k}}$ represents the Dirac distribution located in $\xi_{k}\in\Omega$ and  has physical dimensions of the reciprocal of a length squared.
Notice that in measuring the Frank angles we adopt the notation of \cite{Cesana2024a}.

Upon taking the duality product $\langle\theta, v\rangle$, which, by virtue of \eqref{eq_incompatibilityMeasure}, returns $\langle\theta, v\rangle = \sum_{k=1}^{N}s_{k}v(\xi_{k})$, we can define the energy functional \cite{Cesana2024a} 
    \begin{linenomath}
        \begin{align}
            \mathcal{I}^{\theta}(v;\Omega) \coloneqq \mathcal{W}(v;\Omega) + \langle\theta, v\rangle = \mathcal{W}(v;\Omega) + \sum_{k=1}^{N}s_{k}v(\xi_{k}).
        \end{align}
    \end{linenomath}    
This represents the total energy of the system and is composed by the elastic energy $\mathcal{W}(v;\Omega)$ and the energy associated with the incompatibilities $\langle\theta, v\rangle$.  

The partial differential equation governing the system is the Euler-Lagrange equation obtained by imposing the vanishing of the first variation of $\mathcal{I}^{\theta}(\cdot,\Omega)$ with respect to the Airy potential. Its solution $\bar{v}$ is sought for in $H^{2}_{0}(\Omega)$ by solving the biharmonic boundary value problem \cite{Cesana2024a}
\begin{linenomath}
    \begin{align}
    \frac{1-\nu^2}{E}\Bilap v = -\theta \quad \textrm{in $\Omega$}, &&
    v = 0 \quad\textrm{on $\partial\Omega$}, &&
   \partial_{\boldsymbol{n}}{v} = 0 \quad\textrm{on $\partial\Omega$},
   \label{eq_EquilibriumProblem}
    \end{align}
\end{linenomath}
and, due to the strict convexity of $\mathcal{I}^{\theta}(\,\cdot\,;\Omega)$ with respect to $v \in H^{2}_{0}(\Omega)$, is such that~\cite{Cesana2024a}
\begin{linenomath}
\begin{align}
\bar{v} = \mathrm{argmin}\{\mathcal{I}^{\theta}(v;\Omega)\,|\,v\in H^{2}_{0}(\Omega)\}.
\label{eq_MinimumProblem}
\end{align}
\end{linenomath}

Upon setting $W\coloneqq\mathcal{W}(\bar{v};\Omega)$ to indicate the \emph{actual} elastic energy of the system, i.e., the one determined by substituting  the Airy potential that solves \eqref{eq_EquilibriumProblem} into the elastic energy functional $\mathcal{W}(\,\cdot\,;\Omega)$, we have recourse to the following:
\begin{theorem}[Clapeyron's theorem in $H^{2}_{0}(\Omega)$]\label{thm_Clapeyron}
    Given $\bar{v}\in H^{2}_{0}(\Omega)$ the solution to  \eqref{eq_EquilibriumProblem}, it follows that the \emph{actual} elastic energy $W \defeq \mathcal{W}(\bar{v};\Omega)$ reads
    \begin{linenomath}
    \begin{align}
        W = -\tfrac{1}{2}\langle\theta,\bar{v}\rangle.
        \label{eq_ElasticEnergySolution}
    \end{align}
\end{linenomath}
\end{theorem}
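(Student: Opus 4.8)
The plan is to exploit the fact that $\bar v$ is not merely the minimizer \eqref{eq_MinimumProblem} but, equivalently, the unique weak solution of the biharmonic problem \eqref{eq_EquilibriumProblem}, and then to test the weak formulation against $\bar v$ itself. Concretely, I would first record the weak form of the Euler--Lagrange equation: since $\bar v$ makes the first variation of $\mathcal{I}^\theta(\,\cdot\,;\Omega)$ vanish, differentiating $t\mapsto\mathcal{I}^\theta(\bar v+tw;\Omega)$ at $t=0$ for an arbitrary $w\in H^2_0(\Omega)$ yields
\begin{linenomath}
\begin{equation*}
\frac{1-\nu^2}{E}\int_\Omega \Delta\bar v\,\Delta w + \langle\theta, w\rangle = 0 \qquad \text{for all } w\in H^2_0(\Omega).
\end{equation*}
\end{linenomath}
This is simply the symmetric bilinear form associated with the quadratic energy $\mathcal{W}$, paired against the linear term $\langle\theta,\cdot\rangle$, and it is the same identity one obtains by multiplying \eqref{eq_EquilibriumProblem} by $w$ and integrating by parts twice, the boundary terms dropping out because $w=\partial_{\boldsymbol n}w=0$ on $\partial\Omega$.

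The decisive step is then to choose the admissible test function $w=\bar v$, which is legitimate precisely because $\bar v\in H^2_0(\Omega)$. Substituting gives
\begin{linenomath}
\begin{equation*}
\frac{1-\nu^2}{E}\int_\Omega (\Delta\bar v)^2 + \langle\theta,\bar v\rangle = 0,
\end{equation*}
\end{linenomath}
and recognizing that the first term equals exactly $2\mathcal{W}(\bar v;\Omega)=2W$ by the definition \eqref{eq_ElasticEnergyFunctional} rearranges immediately to $W=-\tfrac12\langle\theta,\bar v\rangle$, which is \eqref{eq_ElasticEnergySolution}. The factor $\tfrac12$ is a direct manifestation of the $2$-homogeneity of the elastic energy; indeed the argument is nothing but Euler's identity for the quadratic functional $\mathcal{W}$ combined with the stationarity of $\bar v$.

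The only point that genuinely requires care --- though it is not really an obstacle here --- is ensuring that the pairing $\langle\theta,\bar v\rangle$ is well defined when $\bar v$ is inserted as a test function, i.e.\ that the concentrated measure $\theta=\sum_{k}s_k\delta_{\xi_k}$ may legitimately act on $\bar v$. This is guaranteed by the two-dimensional Sobolev embedding $H^2(\Omega)\hookrightarrow C^0(\overline\Omega)$, which makes the point evaluations $\bar v(\xi_k)$ meaningful and renders $\langle\theta,\bar v\rangle=\sum_k s_k\bar v(\xi_k)$ finite; the very same embedding is what guarantees well-posedness of \eqref{eq_MinimumProblem} to begin with. I therefore expect the whole proof to be short, with no delicate estimates needed beyond the embedding already invoked in setting up the problem.
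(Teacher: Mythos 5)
Your proposal is correct and follows essentially the same route as the paper: testing the equilibrium equation against $\bar v$ itself, using the homogeneous boundary conditions to pass from $\int_\Omega(\Delta^2\bar v)\bar v$ to $\int_\Omega(\Delta\bar v)^2=2W\cdot\tfrac{E}{1-\nu^2}$, and rearranging. The additional observations on the weak formulation and the embedding $H^2(\Omega)\hookrightarrow C^0(\overline\Omega)$ are consistent with the paper's setup but do not change the argument.
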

\begin{proof} Upon testing  \eqref{eq_EquilibriumProblem} against the solution $\bar{v}$, we obtain 
\begin{linenomath}
        \begin{align}
            -\langle\theta,\bar{v}\rangle = \frac{1-\nu^2}{E}\int_{\Omega}(\Bilap \bar{v})\bar{v} = \frac{1-\nu^2}{E}\int_{\Omega}(\Lap \bar{v})^2 = 2W,
            \label{eq_centralDisclination_5}
        \end{align}
\end{linenomath}
where the second equality follows from the boundary conditions in \eqref{eq_EquilibriumProblem}.
\end{proof}

Under the assumption that the Frank angles of all the considered disclinations are constant, Clapeyron's Theorem in the form of \eqref{eq_ElasticEnergySolution} permits to express the \emph{actual} elastic energy $W$ in terms of the positions of the $N$ disclinations as 
\begin{linenomath}
    \begin{align}
        W \equiv \hat{W}(\xi_{1},\ldots, \xi_{N}) = -\frac{1}{2}\sum_{k=1}^{N}s_{k}\bar{v}(\xi_{k}).
        \label{sec_energyDisclinations}
    \end{align}
\end{linenomath}

\section{Elastic energy for fixed-in-space system of disclinations}
\label{sec_EnergiesSystemDisclinations}

We focus now on the computation of the Airy potential function and of the \emph{actual} elastic energy for a system of $N$ disclinations in  $\Omega$, hereafter assumed to be the open ball of radius $R>0$ centered in the origin, i.e., $\Omega\equiv B_{R}(0)$. To this end, we consider $N$ disclinations $d_{1},\ldots, d_{N}$ assumed to be fixed in space, as if we were looking at a \emph{photograph} of the system, and \eqref{sec_energyDisclinations} represents the elastic energy in the \emph{static} scenario. 

%
\subsection{The single disclination scenario}\label{sec_OneDisclination}
As a point of departure, we consider the problem of a single disclination ($N=1$) placed in $\xi\in\Omega$, and fixed in this point. In this case, the incompatibility measure $\theta$ in \eqref{eq_EquilibriumProblem} reduces to the single Dirac delta centered in $\xi$, i.e., $\theta \equiv s\delta_{\xi}$. For $\xi=0$, the corresponding Airy potential was computed in \cite{Cesana2024a}. However,
the generalization to a disclination fixed in space and located at $\xi\neq 0$ can be achieved by determining the Green function associated with the distribution $\theta \equiv s\delta_{\xi}$ (see e.g. the \emph{``clamped disk''} problem \cite[Section 4]{Nakai1978}). In this case, the  solution $\bar{v}\in H^{2}_{0}(\Omega)$ to  
\eqref{eq_EquilibriumProblem} reads
\begin{linenomath}
        \begin{align}
            \bar{v}(x) \coloneqq \begin{cases}
                \bar{u}(x), &x\in\Omega\setminus\{\xi\},\\
                \bar{w}, &x = \xi,
            \end{cases}
            \label{eq_eccentricDisclination_2}
        \end{align}
\end{linenomath}
where $\bar{u}(x)$ is defined, for any $x\in\Omega\setminus\{\xi\}$, as 
\begin{linenomath}
        \begin{align}
            \bar{u}(x) \defeq 
            &-\frac{E}{1-\nu^2}\frac{sR^{2}}{16\pi}\frac{|x-\xi|^2}{R^2}\log\frac{|x-\xi|^2}{R^2}-\frac{E}{1-\nu^2}\frac{sR^{2}}{16\pi}\left(1-\frac{|x|^2}{R^2}\right)\left(1-\frac{|\xi|^2}{R^2}\right)\nonumber\\
            &+ \frac{E}{1-\nu^2}\frac{sR^{2}}{16\pi}\frac{|x-\xi|^2}{R^2}\log\frac{R^4-2R^2x\cdot\xi + |x|^2|\xi|^2}{R^4}, 
            \label{eq_eccentricDisclination_2bis}
        \end{align}
\end{linenomath}
and $\bar{w}$ is defined as the extension by continuity of $\bar{u}$ in $x = \xi$, i.e., 
\begin{linenomath}
    \begin{align}
        \bar{w} \coloneqq -\dfrac{E}{1-\nu^2}\dfrac{sR^{2}}{16\pi}\bigg(1-\dfrac{|\xi|^2}{R^2}\bigg)^2.
        \label{eq_eccentricDisclination_2bisbis}
    \end{align}
\end{linenomath}
The first term on the right-hand side of \eqref{eq_eccentricDisclination_2bis} is the Green function of the biharmonic problem in $\mathbb{R}^{2}$ with the singularity being situated in $x = \xi$; the second and the third terms are due to the boundedness of $\Omega$. In particular, the third term stems from the disclination not being placed in the origin, and features the quantity 
\begin{linenomath}
    \begin{align}
        \frac{R^{4}-2R^{2}x\cdot\xi+|x|^{2}|\xi|^{2}}{R^{4}} = \left(1-\frac{|x|^{2}}{R^2}\right)\left(1-\frac{|\xi|^{2}}{R^2}\right)+ \frac{|x-\xi|^2}{R^2},
        \label{eq_ImageChargeTerm}
    \end{align}
\end{linenomath}
which appears when applying the method of \textit{sources}, or of \textit{image charges}, to look for Green functions in bounded domains (see e.g. \cite[Chapter IV]{Tikhonov2013}).

By employing  Theorem \ref{thm_Clapeyron}, the \emph{actual} elastic energy reads
\begin{linenomath}
        \begin{align}
            W = -\tfrac{1}{2} s\bar{v}(\xi) = -\tfrac{1}{2} s\bar{w} = \frac{E}{1-\nu^2}\frac{s^2R^{2}}{32\pi}\left(1-\frac{|\xi|^{2}}{R^2}\right)^2,
            \label{eq_eccentriclDisclination_3}
        \end{align}
\end{linenomath}
and we can see that: if $\xi = 0$, the resulting elastic energy $W$ is the same as the one in computed in \cite{Cesana2024a}; the energy $W\equiv \hat{W}(\xi)$ depends on $\xi$ solely through the distance between the origin and $\xi$, i.e., $W\equiv \check{W}(|\xi|)$, thereby making $W$ \textit{invariant} with respect to \emph{rotations} of the disclination around the origin; moreover, $\check{W}$ has a maximum if $|\xi| = 0$, i.e., in the origin, and $\check{W}$ goes to zero \textit{quadratically} as $\xi$ approaches the boundary, i.e., $\check{W}(|\xi|) = O((R-|\xi|)^{2})$ for $|\xi| \to R$.

\subsection{The $N$ disclinations scenario}
\label{sec_systemDisclinations}
When $N$ disclinations are present in $\Omega$, then the Airy potential characterizing the system is given by the solution $\bar{v}\in H^{2}_{0}(\Omega)$ of the biharmonic problem in \eqref{eq_EquilibriumProblem} with the incompatibility measure specified in \eqref{eq_incompatibilityMeasure}. By the linearity of  \eqref{eq_EquilibriumProblem}, and by the \textit{homogeneity} of the associated Dirichlet and Neumann boundary conditions, it can be proved that $\bar{v}$ is given according to  \emph{superposition principle} as the summation $\bar{v} = \bar{v}_{1} + \cdots + \bar{v}_{N}$, where the $h$th function $\bar{v}_{h}$, with $h=1,\ldots,N$, is the solution of the $h$th subproblem 
\begin{linenomath}
    \begin{align}
    \frac{1-\nu^2}{E}\Bilap v = -\theta_{h} \quad \textrm{in $\Omega$}, &&
    v = 0 \quad\textrm{on $\partial\Omega$}, &&
   \partial_{\boldsymbol{n}}{v} = 0\quad \textrm{on $\partial\Omega$}.
   \label{eq_EquilibriumProblem_k_SubProblem}
    \end{align}
\end{linenomath}
Since each $\bar{v}_{h}$ is of the type reported in \eqref{eq_eccentricDisclination_2}, we conclude that, upon setting $C\coloneqq ER^2/(16\pi(1-\nu^{2}))$, the solution $\bar{v}$ is given by
\begin{linenomath}
        \begin{align}
            \bar{v}(x) = \sum_{h = 1}^{N}\bar{v}_{h}(x), &&\bar{v}_{h}(x)\coloneqq\begin{cases}
                \bar{u}_{h}(x), &x\in\Omega\setminus\{\xi_{h}\},\\
                \bar{w}_{h}, &x = \xi_{h},
            \end{cases} 
            \label{eq_ManyDisclinations_3}
        \end{align}
\end{linenomath}
where, for each $h=1,\ldots, N$, $\bar{u}_{h}(x)$ is defined for any $x\in\Omega\setminus\{\xi_{h}\}$ as  
\begin{linenomath}
        \begin{align}
            \bar{u}_{h}(x) =  
            &-Cs_{h}\frac{|x-\xi_{h}|^2}{R^2}\log\frac{|x-\xi_{h}|^2}{R^2}-Cs_{h}\left(1-\frac{|x|^2}{R^2}\right)\left(1-\frac{|\xi_{h}|^2}{R^2}\right)\nonumber\\
            &+Cs_{h}\frac{|x-\xi_{h}|^2}{R^2}\log\frac{R^{4}-2R^{2}x\cdot\xi_{h}+|x|^{2}|\xi_{h}|^{2}}{R^{4}},
            \label{eq_ManyDisclinations_3bis}
        \end{align}
\end{linenomath}
%
and $\bar{w}_{h}$ is defined by continuity as 
\begin{linenomath}
    \begin{align}
        \bar{w}_{h} \coloneqq -Cs_{h}\bigg(1-\dfrac{|\xi_{h}|^2}{R^2}\bigg)^2.
        \label{eq_ManyDisclinations_3bisbis}
    \end{align}
\end{linenomath}

By virtue of \eqref{eq_ManyDisclinations_3}, we can state the following:
\begin{lemma}[Reciprocity in $H^{2}_{0}(\Omega)$]\label{thm_Betti}
    Given $\theta = \theta_{1}+\cdots+\theta_{N}$ and the solution $\bar{v} = \bar{v}_{1}+\cdots+\bar{v}_{N}$ to \eqref{eq_EquilibriumProblem} reported in \eqref{eq_ManyDisclinations_3}, it follows that 
    \begin{linenomath}
        \begin{align}
            \langle\theta_{k},\bar{v}_{h}\rangle \equiv s_{k}\bar{v}_{h}(\xi_{k})=  s_{h}\bar{v}_{k}(\xi_{h}) \equiv \langle\theta_{h},\bar{v}_{k}\rangle, &&h,k=1,\ldots,N.
            \label{eq_Reciprocity}
        \end{align}
    \end{linenomath}
\end{lemma}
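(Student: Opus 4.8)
The plan is to read \eqref{eq_Reciprocity} as a Betti-type reciprocity relation and to derive it from the symmetry of the bilinear form $(\phi,\psi)\mapsto\int_{\Omega}\Lap\phi\,\Lap\psi$ underlying the biharmonic operator, rather than from the explicit expressions \eqref{eq_ManyDisclinations_3bis}–\eqref{eq_ManyDisclinations_3bisbis}. First I would record the weak (variational) characterization of the $h$th subproblem \eqref{eq_EquilibriumProblem_k_SubProblem}: since $\bar{v}_{h}\in H^{2}_{0}(\Omega)$ minimizes $\mathcal{I}^{\theta_{h}}(\,\cdot\,;\Omega)$, the vanishing of its first variation gives
\begin{linenomath}
\begin{align}
\frac{1-\nu^2}{E}\int_{\Omega}\Lap\bar{v}_{h}\,\Lap\phi = -\langle\theta_{h},\phi\rangle \qquad\text{for all }\phi\in H^{2}_{0}(\Omega).
\end{align}
\end{linenomath}
As in the proof of Theorem~\ref{thm_Clapeyron} (compare \eqref{eq_centralDisclination_5}), it is precisely the homogeneous Dirichlet and Neumann conditions carried by the test function $\phi$ that suppress the boundary terms and reduce the full form associated with $\mathcal{W}$ to the symmetric expression $\int_{\Omega}\Lap\bar{v}_{h}\,\Lap\phi$.

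Next, because in the plane one has the Sobolev embedding $H^{2}_{0}(\Omega)\hookrightarrow C^{0}(\overline{\Omega})$, every $\bar{v}_{k}$ is an admissible test function and the Dirac sources act on it by point evaluation, so that $\langle\theta_{h},\bar{v}_{k}\rangle = s_{h}\bar{v}_{k}(\xi_{h})$ is well defined. I would then choose $\phi=\bar{v}_{k}$ in the weak formulation for $\bar{v}_{h}$, and symmetrically $\phi=\bar{v}_{h}$ in the weak formulation for $\bar{v}_{k}$, obtaining
\begin{linenomath}
\begin{align}
-s_{h}\bar{v}_{k}(\xi_{h}) &= \frac{1-\nu^2}{E}\int_{\Omega}\Lap\bar{v}_{h}\,\Lap\bar{v}_{k}, &
-s_{k}\bar{v}_{h}(\xi_{k}) &= \frac{1-\nu^2}{E}\int_{\Omega}\Lap\bar{v}_{k}\,\Lap\bar{v}_{h}.
\end{align}
\end{linenomath}
The two right-hand sides coincide by commutativity of the product under the integral sign, and equating the left-hand sides yields $s_{h}\bar{v}_{k}(\xi_{h})=s_{k}\bar{v}_{h}(\xi_{k})$, which is exactly \eqref{eq_Reciprocity}.

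The delicate point — and the only one — is the legitimacy of pairing a finite-energy solution with a point mass, i.e.\ that the right members above are genuinely the point values $s_{h}\bar{v}_{k}(\xi_{h})$ and $s_{k}\bar{v}_{h}(\xi_{k})$ rather than merely formal symbols. This is settled by the continuity of $H^{2}_{0}(\Omega)$ functions in two dimensions, together with the fact that each $\bar{v}_{k}$ has already been extended continuously to its own singular point via $\bar{w}_{k}$ in \eqref{eq_ManyDisclinations_3bisbis}; moreover $\Lap\bar{v}_{h}$ behaves like $\log|x-\xi_{h}|$ near $\xi_{h}$ and hence lies in $L^{2}(\Omega)$, so the symmetric integral is absolutely convergent and the two evaluations are interchangeable. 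As an alternative, purely computational route one may read off from \eqref{eq_ManyDisclinations_3bis}–\eqref{eq_ManyDisclinations_3bisbis} that $\bar{v}_{h}=s_{h}\,\mathcal{G}(\,\cdot\,,\xi_{h})$ for a kernel $\mathcal{G}(x,y)$ that is manifestly symmetric under $x\leftrightarrow y$ (each term depends on $x$ and $y$ only through $|x-y|$, $|x|$, $|y|$, and $x\cdot y$), whence $s_{k}\bar{v}_{h}(\xi_{k})=s_{k}s_{h}\,\mathcal{G}(\xi_{k},\xi_{h})=s_{h}s_{k}\,\mathcal{G}(\xi_{h},\xi_{k})=s_{h}\bar{v}_{k}(\xi_{h})$.
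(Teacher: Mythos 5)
Your proof is correct, but your main line of argument is genuinely different from the paper's. The paper disposes of the lemma with a one-line appeal to ``direct computations'', i.e.\ precisely your \emph{alternative} route: each $\bar{v}_{h}$ is $s_{h}$ times a kernel $\mathcal{G}(\,\cdot\,,\xi_{h})$ that is manifestly symmetric in its two arguments (every term in \eqref{eq_ManyDisclinations_3bis} depends only on $|x-\xi_{h}|$, $|x|$, $|\xi_{h}|$, and $x\cdot\xi_{h}$), whence $s_{k}\bar{v}_{h}(\xi_{k})=s_{h}s_{k}\mathcal{G}(\xi_{k},\xi_{h})=s_{h}\bar{v}_{k}(\xi_{h})$. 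Your primary argument instead derives reciprocity from the symmetry of the bilinear form $\int_{\Omega}\Lap\phi\,\Lap\psi$ by testing the weak formulation of the $h$th subproblem with $\bar{v}_{k}$ and vice versa; this is the classical Betti-type proof, and it is more robust: it requires no knowledge of the explicit Green function and hence works verbatim for any bounded domain $\Omega$, not just the disk, with the only analytic input being the embedding $H^{2}_{0}(\Omega)\hookrightarrow C^{0}(\overline{\Omega})$ in two dimensions (so that $\delta_{\xi}$ is a bona fide element of the dual and point evaluation of the test function is legitimate). What the paper's computational route buys is concreteness, since the kernel is already written down for the subsequent energy formulas; what your variational route buys is generality and a cleaner explanation of \emph{why} reciprocity holds. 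One small redundancy: the $L^{2}$-integrability of $\Lap\bar{v}_{h}$ follows directly from $\bar{v}_{h}\in H^{2}_{0}(\Omega)$, so the explicit logarithmic asymptotics near $\xi_{h}$ are not needed to justify the convergence of the symmetric integral.
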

\begin{proof} It follows from direct computations.
\end{proof}
Note that the result in \eqref{eq_Reciprocity} leads to Betti's theorem of linear elasticity.

\begin{theorem}[Betti's theorem in $H^{2}_{0}(\Omega)$]
Given $\bar{v} = \bar{v}_{1}+\cdots+\bar{v}_{n}$, the \emph{actual} elastic energy $W$ of the system can be expressed as the following sum  
\begin{linenomath}
    \begin{align}
        W = \sum_{k=1}^{N}W_{k} + \sum_{k=1}^{N}\sum_{h=k+1}^{N}W_{kh},
        \label{eq_Betti}
    \end{align}
\end{linenomath}
where we have defined the energetic contributions
\begin{linenomath}
    \begin{subequations}\label{eq_3.12}
        \begin{align}
            W_{k} \coloneqq&\; \frac{1}{2}Cs_{k}^2\Big(1-\frac{|\xi_{k}|^{2}}{R^2}\Big)^2,
            \label{eq_ManyDisclinations_5}\\
    W_{kh} \coloneqq&\; Cs_{h}s_{k}\frac{|\xi_{h}-\xi_{k}|^2}{R^2}\log\frac{|\xi_{h}-\xi_{k}|^2}{R^2} +Cs_{h}s_{k}\Big(1-\frac{|\xi_{h}|^{2}}{R^2}\Big)\Big(1-\frac{|\xi_{k}|^{2}}{R^2}\Big)\nonumber\\
    &-Cs_{h}s_{k}\frac{|\xi_{h}-\xi_{k}|^2}{R^2}\log \frac{R^{4}-2R^{2}\xi_{k}\cdot\xi_{h}+|\xi_{k}|^{2}|\xi_{h}|^{2}}{R^{4}},
\label{eq_ManyDisclinations_6}
    \end{align}
   \end{subequations}
\end{linenomath}
and the argument of the logarithm in the last summand on \eqref{eq_ManyDisclinations_6} admits the same decomposition as the one shown in \eqref{eq_ImageChargeTerm}, with $x\equiv \xi_{k}$ and $\xi \equiv \xi_{h}$.
\end{theorem}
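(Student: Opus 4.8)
The plan is to combine Clapeyron's theorem (Theorem \ref{thm_Clapeyron}) with the reciprocity relation (Lemma \ref{thm_Betti}) and then read off the individual contributions by direct substitution. Starting from $W = -\tfrac{1}{2}\langle\theta,\bar{v}\rangle$, I would expand both the incompatibility measure $\theta = \sum_{k}\theta_{k}$ and the potential $\bar{v} = \sum_{h}\bar{v}_{h}$ and use the bilinearity of the duality product to write
\begin{linenomath}
\begin{align}
W = -\frac{1}{2}\sum_{k=1}^{N}\sum_{h=1}^{N}\langle\theta_{k},\bar{v}_{h}\rangle = -\frac{1}{2}\sum_{k=1}^{N}\langle\theta_{k},\bar{v}_{k}\rangle -\frac{1}{2}\sum_{k=1}^{N}\sum_{h\neq k}\langle\theta_{k},\bar{v}_{h}\rangle,\nonumber
\end{align}
\end{linenomath}
thereby separating the diagonal from the off-diagonal part.

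For the diagonal terms, I would evaluate $\langle\theta_{k},\bar{v}_{k}\rangle = s_{k}\bar{v}_{k}(\xi_{k}) = s_{k}\bar{w}_{k}$, using the by-continuity value \eqref{eq_ManyDisclinations_3bisbis} rather than the singular expression $\bar{u}_{k}$. Substituting $\bar{w}_{k} = -Cs_{k}(1-|\xi_{k}|^{2}/R^{2})^{2}$ then yields $-\tfrac{1}{2}\langle\theta_{k},\bar{v}_{k}\rangle = \tfrac{1}{2}Cs_{k}^{2}(1-|\xi_{k}|^{2}/R^{2})^{2} = W_{k}$, which accounts for the first sum in \eqref{eq_Betti}.

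For the off-diagonal terms, I would invoke the symmetry $\langle\theta_{k},\bar{v}_{h}\rangle = \langle\theta_{h},\bar{v}_{k}\rangle$ furnished by Lemma \ref{thm_Betti} to fold the full sum over $h\neq k$ onto the ordered sum $h>k$, picking up a factor of two that cancels the prefactor $\tfrac{1}{2}$:
\begin{linenomath}
\begin{align}
-\frac{1}{2}\sum_{k=1}^{N}\sum_{h\neq k}\langle\theta_{k},\bar{v}_{h}\rangle = -\sum_{k=1}^{N}\sum_{h=k+1}^{N}s_{k}\bar{v}_{h}(\xi_{k}).\nonumber
\end{align}
\end{linenomath}
Since for $h\neq k$ the point $\xi_{k}$ lies away from the singularity $\xi_{h}$, I would evaluate $\bar{v}_{h}(\xi_{k}) = \bar{u}_{h}(\xi_{k})$ by setting $x = \xi_{k}$ in \eqref{eq_ManyDisclinations_3bis}; multiplying by $-s_{k}$ and using $|\xi_{k}-\xi_{h}| = |\xi_{h}-\xi_{k}|$ reproduces, term by term, the expression $W_{kh}$ in \eqref{eq_ManyDisclinations_6}, closing the argument.

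The computation is essentially bookkeeping rather than analysis, so I do not anticipate a genuine obstacle; the only points demanding care are (i) using the continuous value $\bar{w}_{k}$ for the self-terms, so as to sidestep the logarithmic singularity of $\bar{u}_{k}$ at $x=\xi_{k}$, and (ii) deploying reciprocity so that the factor $\tfrac{1}{2}$ disappears from the cross terms while surviving in the self-terms. This asymmetry between the prefactors of $W_{k}$ and $W_{kh}$ is precisely the signature of the diagonal/off-diagonal split and is the one place where a careless count would introduce a spurious factor of two.
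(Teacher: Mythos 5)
Your proposal is correct and follows essentially the same route as the paper's own proof: both start from Clapeyron's identity $W=-\tfrac12\langle\theta,\bar v\rangle$, expand $\bar v=\sum_h\bar v_h$, use the reciprocity Lemma to fold the off-diagonal sum onto $h>k$ (cancelling the $\tfrac12$ there but not on the diagonal), and evaluate the self-terms via $\bar w_k$ and the cross-terms via $\bar u_h(\xi_k)$. The only difference is notational (you phrase the steps through the duality products $\langle\theta_k,\bar v_h\rangle$, which the paper writes directly as $s_k\bar v_h(\xi_k)$).
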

\begin{proof} By substituting the decomposition $\bar{v} = \sum_{h=1}^{N}\bar{v}_{h}$ into \eqref{sec_energyDisclinations} and employing Theorem \ref{thm_Betti}, the \emph{actual} elastic energy in \eqref{sec_energyDisclinations} becomes
\begin{linenomath}
        \begin{align}
            W = -\frac{1}{2}\sum_{k=1}^{N} s_{k}\bar{v}(\xi_{k}) = -\frac{1}{2}\sum_{k=1}^{N}\sum_{h=1}^{N} s_{k}\bar{v}_{h}(\xi_{k})&= -\frac{1}{2}\sum_{k=1}^{N} s_{k}\bar{v}_{k}(\xi_{k}) - \sum_{k=1}^{N}\sum_{h=k+1}^{N}s_{k}\bar{v}_{h}(\xi_{k})\nonumber\\
            &= -\frac{1}{2}\sum_{k=1}^{N} s_{k}\bar{w}_{k} - \sum_{k=1}^{N}\sum_{h=k+1}^{N}s_{k}\bar{u}_{h}(\xi_{k}),
            \label{eq_ManyDisclinations_4}
        \end{align}
\end{linenomath}
which, by virtue of \eqref{eq_ManyDisclinations_5} and \eqref{eq_ManyDisclinations_6}, proves the thesis. 
\end{proof}

According to Betti's theorem, which descends from the \emph{superposition principle}, the \emph{actual} elastic energy can be expressed as the sum of $N$ energies $W_{k}$, with $k=1,\ldots, N$, and $N(N-1)/2$ energies $W_{kh}$, with $h,k=1,\ldots, N$ and $k<h$. Moreover, Betti's theorem provides the following meaning for the energies in \eqref{eq_ManyDisclinations_5} and \eqref{eq_ManyDisclinations_6}: $W_{k}$ is the elastic energy associated with the $k$th subproblem \eqref{eq_EquilibriumProblem_k_SubProblem}, i.e., the one in which only the $k$th disclination $d_{k}$ is present in $\Omega$; $W_{kh}$ is twice the elastic energy associated with a ``fictitious'' system in which the Airy potential is $\bar{v}_{h}$ and the incompatibility measure is $\theta_{k}$, or vice versa. 
\subsection{Towards dynamics: \emph{unfixing} the disclinations}
\label{sec_unfixing}
    Until now we have studied the disclinations in $\Omega$ in the static case, i.e., when they were fixed in space. As done in \cite{Blass2015a}, the energies featuring in \eqref{eq_Betti} can be identified with the values taken by appropriate functions $\hat{W}$, $\hat{W}_{k}$, and $\hat{W}_{kh}$ of the placements of the disclinations $\xi_{1},\ldots,\xi_{N}$. Hence, by setting $\boldsymbol{\xi} \coloneqq (\xi_{1},\ldots,\xi_{N})\in \Omega^{N}$, we write
\begin{linenomath}
    \begin{align}
        W \equiv \hat{W}(\boldsymbol{\xi})= \sum_{k=1}^{N}\hat{W}_{k}(\boldsymbol{\xi}) + \sum_{k=1}^{N}\sum_{h=k+1}^{N}\hat{W}_{kh}(\boldsymbol{\xi} ),
        \label{eq_BettiFunctions}
    \end{align}
\end{linenomath}
and we notice that $\hat{W}$ is well defined in the connected open set $\mathcal{Y}\coloneqq \Omega^{N}\setminus\Pi$, where $\Pi$ is given by (see \cite[Equation~(2.13)]{Blass2015a}) %
\begin{linenomath}
    \begin{align}
        \Pi \coloneqq \bigcup_{k=1}^{N}\bigcup_{h=k+1}^{N}\Pi_{kh}, &&\Pi_{kh}\coloneqq\big\{\boldsymbol{\xi}\in\Omega^{N}\,\big|\,\xi_{k} = \xi_{h}, k\neq h\big\}.
        \label{eq_collision_set}
    \end{align}
\end{linenomath}
In the jargon of \cite{Blass2015a}, when two different disclinations, say $d_{k}$ and $d_{h}$, with $k\neq h$, share the same position $\xi_{k} = \xi_{h}$, one speaks of a \emph{``collision''}. 
However, we use here a slightly different terminology. Indeed, studying the dynamics of the disclinations (see Section \ref{sec_DisclinationDynamics}), we notice that two disclinations placed at different positions at some initial time 
tend not to intersect in finite time. On the other hand, it is possible to prepare a \emph{Gedankenexperiment} in which $d_{k}$ and $d_{h}$ have coincident positions, i.e., $\xi_{k}=\xi_{h}$, but, in general, different Frank angles $s_{k}\neq s_{h}$. In such a situation, we say that the two  disclinations \emph{superpose}, and we rename the set $\Pi$ as the \emph{superposition set} \cite{Qi2009}.

It is also worth noticing that $\hat{W}$ can be extended by continuity in all the points of $\Pi$, and, if $\boldsymbol{\xi}\in\Pi_{kh}$ for some $k,h = 1,\ldots, N$, with $k\neq h$, then the resulting energy  would feature a contribution, comprehensive of the two disclinations $d_{k}$ and $d_{h}$, that is equivalent to having one \emph{effective} disclination in $\xi_{k}=\xi_{h}$, with Frank angle $s_{k}+s_{h}$. In particular, if there are two disclinations only, $d_{1}$ and $d_{2}$, such that $s_{1} = -s_{2}$, then the two disclinations cancel each other out (i.e., they \emph{annihilate} each other, in the terminology of \cite{Qi2009}), thereby restoring the compatibility of the deformation at the point in which they superpose. However, this annihilation does not persist, in general, if other interactions are perceived. Indeed, even though one prepares an ``experiment'' (\emph{in silico}, for this work) in which two disclinations with opposite Frank angles are superposed at the initial time of their dynamics, this compatible configuration of the system breaks dynamically as soon as another disclination is introduced. If, on the one hand, this behavior is intuitive, because one switches from a two-disclination to a three-disclination system, on the other hand, the \emph{splitting} phenomenon of the two disclinations is important because it shows that having two disclinations with opposite Frank angles, and interacting with a third disclination, \emph{is not equivalent} to having one disclination only (the third one). This issue is addressed again in Remark \ref{rem_LimitCase_2}.

Equations \eqref{eq_ManyDisclinations_5} and \eqref{eq_ManyDisclinations_6} 
yield 
that the function $\hat{W}$ is  differentiable in $\mathcal{Y}$ with respect to each one of its entries 
$\xi_{1}, \ldots, \xi_{N}$. Hence, the elastic energy function $\hat{W}$ is continuous and differentiable in $\mathcal{Y}$ with respect to all of its arguments \emph{in the classical sense}, so that the gradients of $\hat{W}$ with respect to each disclination position $\xi_{1},\ldots, \xi_{N}$ are well defined in $\mathcal{Y}$. 

Note that, in principle, the Frank angle associated with each disclination may have dynamics of its own, thereby requiring $\hat{W}$, $\hat{W}_{k}$, and $\hat{W}_{kh}$, with $k,h=1,\ldots,N$, to be functions of $d_{1},\ldots, d_{N}$ altogether. However, in the rest of this work we assume that the Frank angles of the disclinations do not vary.

Before going further, we emphasize that the dynamics of the disclinations has been the subject of many investigations, involving the concepts of topological charges, phase transitions, and spontaneous symmetry breaking. In this respect, and within an approach to the disclination dynamics based on Statistical Mechanics, mention must be made of the celebrated KTHNY-theory \cite{Kosterlitz1973, Qi2009}. In the sequel, however, rather than following a statistical mechanical methodology, we study the dynamics of a system of $N$ disclinations viewed as \emph{charged point particles}.

\section{Disclination dynamics}
\label{sec_DisclinationDynamics}
 As put forward in Section \ref{sec_Introduction}, 
 we \emph{assume} that the dynamics of the $k$th disclination in $\Omega$, for $k = 1,\ldots, N$, is dissipative and driven by the negative of the gradient of $\hat{W}$ with respect to $\xi_{k}$, thereby yielding a force $\hat{f}_{k} \coloneqq -\nabla_{\xi_{k}}\hat{W}$, formally similar to the Peach-Koehler forces acting on dislocations~\cite{Maugin1993}. 

Thus, upon introducing the initial conditions $\xi_{k}(0) = \xi_{k0}$, with $k=1,\ldots,N$, the dynamics of the $N$ disclinations considered in $\Omega$ is governed by  Cauchy's problem \cite{Blass2015a}
\begin{linenomath}
    \begin{equation}
        \begin{cases}
            \xidot_{k}(t) = -\lambda_{k}\nabla_{\xi_{k}}\hat{W}(\xi_{1}(t), \ldots, \xi_{N}(t)), &
            k = 1,\ldots, N,\\
            \xi_{k}(0) = \xi_{k0}, &k = 1,\ldots, N,
        \end{cases}
    \label{eq_CauchyProblem}
   \end{equation}
\end{linenomath}
where $\xidot_{k}$ is the derivative of $\xi_{k}$ with respect to time, and $\lambda_{k}>0$ is a positive scalar quantity having the meaning of the inverse of a generalized viscosity coefficient. For the sake of simplicity, we assume all the coefficients $\lambda_{k}$, with $k=1,\ldots,N$, to be equal to some $\lambda>0$. Note that models of this type are already used in \cite{Cermelli1999a,Blass2015a} for dislocations.

While some analytical issues of \eqref{eq_CauchyProblem}, such as existence and uniqueness of the solution, are addressed in Section \ref{sec_nondim}, we mainly 
focus now on the modeling aspects of \eqref{eq_CauchyProblem} through the description of two \emph{archetypal} scenarios for a system of disclinations in $\Omega$: these are the  ``one-disclination problem'' and the ``two-disclination problem'', handled in Section \ref{sec_DynamicsOneDisclination} and in Section \ref{sec_TwoDisclinations}, respectively. Note that it is sufficient to consider just these two situations, since the case in which more than two disclinations are present can be reconducted to the two aforementioned ones. In this regard, some benchmarks with $N\geq2$ are presented in Section \ref{sec_simulations}.

\subsection{Non-dimensionalization}\label{sec_nondim}
We rewrite the system in \eqref{eq_CauchyProblem} in non-dimensional form so as to study the dynamics of the disclinations in terms of the rescaled variables $X \coloneqq x/R$ and $T \coloneqq (\lambda C/R^{2})t$. Hence, upon defining the non-dimensional position of the $k$th disclination as $\Xi_{k}\coloneqq \xi_{k}/R$, and scaling the energy $W$ accordingly as $W=CH$, we can recast \eqref{eq_CauchyProblem} as
\begin{linenomath}
    \begin{equation}
        \begin{cases}
            \dot{\Xi}_{k}(T) = -\nabla_{\Xi_{k}}\hat{H}(\Xi_{1}(T), \ldots, \Xi_{N}(T)), &
            k = 1,\ldots, N,\\
            \Xi_{k}(0) = \Xi_{k0}, &k = 1,\ldots, N,
        \end{cases}
    \label{eq_CauchyProblem_Adimensionalized}
   \end{equation}
\end{linenomath}
where we have now set $\dot{\Xi}_{k}\equiv \mathrm{d}\Xi_{k}/\mathrm{d}T$ (from now on, the superimposed dot denotes differentiation with respect to $T$ rather than $t$), and $H = \hat{H}(\Xi_{1},\ldots, \Xi_{N})$ is the \emph{actual} elastic energy of the non-dimensional system, which reads
\begin{linenomath}
    \begin{align}
        \hat{H}(\Xi_{1}, \ldots, \Xi_{N}) =& \,\frac{1}{2}\sum_{k=1}^{N}s_{k}^{2}(1-|\Xi_{k}|^{2})^2 + \sum_{k=1}^{N}\sum_{h=k+1}^{N}s_{k}s_{h}(1-|\Xi_{h}|^{2})(1-|\Xi_{k}|^{2})\nonumber\\
        &+\sum_{k=1}^{N}\sum_{h=k+1}^{N}s_{k}s_{h}|\Xi_{h}-\Xi_{k}|^{2}\log \Phi_{kh}.
        \label{eq_Energy_Nondimensional}
    \end{align}
\end{linenomath}
In \eqref{eq_Energy_Nondimensional}, we have expressed the non-dimensional elastic energy in terms of the quantities $\Phi_{kh}$, with $h,k=1,\ldots, N$, which are ratios depending on the positions $\Xi_{k}$ and $\Xi_{h}$, and are defined as
\begin{linenomath}
    \begin{align}
    \Phi_{kh} \equiv \hat{\Phi}(\Xi_{k}, \Xi_{h}) \coloneqq 
    & \frac{|\Xi_{k}-\Xi_{h}|^{2}}{|\Xi_{k}-\Xi_{h}|^{2} + (1-|\Xi_{k}|^{2})(1-|\Xi_{h}|^{2})}.
    \label{eq_Ratio}
    \end{align}
\end{linenomath}
Note that the symmetry property $\Phi_{kh} \equiv \hat{\Phi}(\Xi_{k}, \Xi_{h})=\hat{\Phi}(\Xi_{h}, \Xi_{k}) \equiv \Phi_{hk}$ holds true. Moreover, $\Phi_{kh}$ vanishes identically when $\Xi_{k}=\Xi_{h}$, and tends to unity from below in the limits $|\Xi_{k}|\rightarrow 1^{-}$ and/or $|\Xi_{h}|\rightarrow 1^{-}$, with $\Xi_{k}\neq \Xi_{h}$. Yet, we have to exclude the value $\Phi_{kh}=0$ to appropriately define the logarithm in \eqref{eq_Energy_Nondimensional}, so that we have $0 < \Phi_{kh} < 1$ . A last remark concerns the fact that the elastic energy in \eqref{eq_Energy_Nondimensional} can be extended 
by continuity to the case in which one, or more, disclinations are on the boundary of $B_{1}(0)$. In particular, if the positions $\Xi_{1},\ldots, \Xi_{N}$ are all on the boundary, then it holds that $\hat{H}(\Xi_{1},\ldots, \Xi_{N}) = 0$.

We emphasize that, in the present framework, \eqref{eq_CauchyProblem_Adimensionalized} can be obtained by employing the \emph{Extended Hamilton Principle} \cite{Lanczos1970a}. In other words, by starting from the Lagrangian function $L\equiv \hat{L}(\Xi_{1},\ldots,\Xi_{N})\coloneqq-\hat{H}(\Xi_{1},\ldots,\Xi_{N})$, and recognizing that, within the non-dimensional setting developed above, each velocity $\dot{\Xi}_{k}$ is, in fact, a normalized dissipative non-potential force linear in the corresponding velocity, one can rewrite the equation of motion for the $k$th disclination as $\dot{\Xi}_{k} = \nabla_{\Xi_{k}}\hat{L}(\Xi_{1},\ldots, \Xi_{N})$.

Due to the non-dimensionalization procedure, the $k$th disclination is indicated with $D_{k} = (s_{k}, \Xi_{k})$, and the set in which the disclinations are positioned is denoted by $B_{1}(0)$, i.e., the open ball of unit radius, centered in the origin. Finally, $\Hhat$ is well defined in $\mathcal{Y}\coloneqq[B_{1}(0)]^{N}\setminus \Pi$, where, with a slight abuse of notation, $\mathcal{Y}$ and $\Pi$ refer now to the non-dimensional setting. In particular, $\Pi$ denotes the \emph{non-dimensional superposition set} (compare with \cite[formula (2.13)]{Blass2015a}) 
\begin{linenomath}
    \begin{align}
        \Pi \coloneqq \bigcup_{k=1}^{N}\bigcup_{h=k+1}^{N}\Pi_{kh}, &&\Pi_{kh}\coloneqq\big\{(\Xi_{1},\ldots, \Xi_{N})\in[B_{1}(0)]^{N}\,\big|\,\Xi_{k} = \Xi_{h}, k\neq h\big\}.
        \label{eq_collision_set}
    \end{align}
\end{linenomath}

To close this section, we remark that, upon employing the expression of $\hat{H}$ in \eqref{eq_Energy_Nondimensional}, it is possible to introduce in \eqref{eq_CauchyProblem_Adimensionalized} the non-dimensional forces $\hat{F}_{k} \coloneqq -\nabla_{\Xi_{k}}\hat{H}$, with $k=1,\ldots, N$, which read 
\begin{linenomath}
        \begin{align}
            \hat{F}_{k}(\Xi_{1},\ldots, \Xi_{N}) = \hat{F}_{k}^{(1)}(\Xi_{k}) + \sum_{h=1,h\neq k}^{N}\hat{F}_{kh}^{(2)}(\Xi_{k},\Xi_{h}) + \sum_{h=1,h\neq k}^{N}\hat{F}_{kh}^{(3)}(\Xi_{k},\Xi_{h}),
            \label{eq_ExistenceTheorem_2}
        \end{align}
\end{linenomath}
where, given $k,h = 1,\ldots, N$ with $h\neq k$,  we have introduced the notation 
\begin{linenomath}
    \begin{subequations}\label{eq_4.7}
        \begin{align}
        &F_{k}^{(1)} \equiv \hat{F}_{k}^{(1)}(\Xi_{k}) = 2s_{k}^{2}(1-|\Xi_{k}|^{2})\Xi_{k},
        \label{eq_ExistenceTheorem_2_bis_a}\\
        &F_{kh}^{(2)} \equiv\hat{F}_{kh}^{(2)}(\Xi_{k},\Xi_{h}) = 2s_{k}s_{h}(1-\Phi_{kh})(1-|\Xi_{h}|^{2})\Xi_{k},
        \label{eq_ExistenceTheorem_2_bis_b}\\
        &F_{kh}^{(3)} \equiv \hat{F}_{kh}^{(3)}(\Xi_{k},\Xi_{h}) = 2s_{k}s_{h}(1-\Phi_{kh} +\log \Phi_{kh})(\Xi_{h}-\Xi_{k}).
        \label{eq_ExistenceTheorem_2_bis_c}
        \end{align}
    \end{subequations}
\end{linenomath}
Each force $\hat{F}_{k}$ comprises one contribution due solely to the interaction of the $k$th disclination with the boundary $\partial B_{1}(0)$  (see \eqref{eq_ExistenceTheorem_2_bis_a}), $N-1$ contributions that account for the effect of $\partial B_{1}(0)$ on $\Xi_{k}$, modulated by the remaining $N-1$ disclinations (see \eqref{eq_ExistenceTheorem_2_bis_b}), and $N-1$ terms stemming from the mutual interactions with the other disclinations (see \eqref{eq_ExistenceTheorem_2_bis_c}). For future use, we also define
\begin{linenomath}
    \begin{align}
       F_{k}^{(2)} \equiv \hat{F}_{k}^{(2)}(\Xi_{1},\ldots, \Xi_{N}) \coloneqq \sum_{h=1,h\neq k}^{N}\hat{F}_{kh}^{(2)}(\Xi_{k},\Xi_{h}),
       \label{eq_tot_force_interaction_a}\\
       F_{k}^{(3)} \equiv \hat{F}_{k}^{(3)}(\Xi_{1},\ldots, \Xi_{N}) \coloneqq \sum_{h=1,h\neq k}^{N}\hat{F}_{kh}^{(3)}(\Xi_{k},\Xi_{h}).
       \label{eq_tot_force_interaction_a}
    \end{align}
\end{linenomath}

Since each $\hat{F}_{k}$ does not depend explicitly on time and is Lipschitz-continuous, uniformly with respect to time, in every compact subset of $\mathcal{Y}$ of the type $\mathcal{R}$, given by
\begin{linenomath}
    \begin{align}
        \mathcal{R}\coloneqq 
        \bigtimes_{k=1}^{N}\big([\Xi_{k0}^{1}-b_{k}^{1}, \Xi_{k0}^{1}+b_{k}^{1}]\times[\Xi_{k0}^{2}-b_{k}^{2}, \Xi_{k0}^{2}+b_{k}^{2}]\big),
        \label{eq_compactSet}
    \end{align}
\end{linenomath}
and having empty intersection with $\Pi$, then, by the \emph{theorem of local existence and uniqueness}, there exists a unique solution to the Cauchy problem in \eqref{eq_CauchyProblem_Adimensionalized}.

\subsection{``One-disclination problem'': the effect of the boundary}\label{sec_DynamicsOneDisclination}
We now discuss the dynamics of a single disclination, initially located in  $\Xi_{0}\in B_{1}(0)$. Hence, upon specializing \eqref{eq_CauchyProblem_Adimensionalized} to the case $N = 1$, we find
\begin{linenomath}
    \begin{equation}
        \begin{cases}
            \dot{\Xi}(T) = 2s^{2}(1-|\Xi(T)|^{2})\Xi(T), &T>0,\\
            \Xi(0) = \Xi_{0},
        \end{cases}
        \label{eq_eccentriclDisclination_4a}
   \end{equation}
\end{linenomath}
which has the structure of a Cauchy problem with a \emph{logistic} differential equation. To solve it, it is convenient to redefine $\Xi$ as the complex-valued function of time $\Xi(T) \equiv \rho(T) \mathrm{exp}(i\phi(T))$, where $\rho \colon [0,+\infty[\,\rightarrow [0,+\infty[$ and $\phi \colon [0,+\infty[\,\rightarrow \mathbb{R}$. Thus,  \eqref{eq_eccentriclDisclination_4a} can be rewritten as 
\begin{linenomath}
        \begin{align}
            &\begin{cases}
                \dot{\rho}(T) = 2s^{2}(1-\rho(T)^{2})\rho(T), &T>0,\\
                \rho(0) = \rho_{0},
            \end{cases} && 
            \begin{cases}
                \dot{\phi}(T) = 0, &T>0,\\
                \phi(0) = \phi_{0}.
            \end{cases}
            \label{eq_AnalyticalSolution_singleDisclination_1}
        \end{align}
\end{linenomath}
Note that assuming $\Xi_{0}$ to be a point of the open ball $B_{1}(0)$ implies the restrictions $\rho_{0}\in[0,1[\,,$ which prevents $\rho_{\mathrm{st}}(T)=1$ from being a stationary solution to the Cauchy problem under study. In fact, the null function is the only stationary solution to \eqref{eq_AnalyticalSolution_singleDisclination_1}$_{1}$, obtained upon choosing  $\rho_{0}=0$. However, if $\rho_{0}\in\,]0,1[\,$, the solutions to the  Cauchy problems in \eqref{eq_AnalyticalSolution_singleDisclination_1} can be determined analytically, and, upon setting $\mu_{0}\defeq (1-\rho_{0}^{2})/\rho_{0}^{2} > 0$, they read
\begin{linenomath}
        \begin{align}
            &\rho(T) = \frac{1}{\sqrt{1+\mu_{0}\exp(-4s^{2}T)}},
            && \phi(T) = \phi_{0},
            && \forall\,T\in [0,+\infty[\,.
            \label{eq_AnalyticalSolution_singleDisclination_2}
        \end{align}
\end{linenomath}
This result states that the dynamics of the disclination $D=(s,\Xi)$, i.e., the evolution in time of $\Xi(T)$ for a given value of $s$, is solely \emph{radial}, as one expects from the fact that the right-hand side of \eqref{eq_eccentriclDisclination_4a}, i.e., the non-dimensional force, is  \emph{radial}. Moreover, we have that $\rho(T)\rightarrow 1$ if  $T\rightarrow+\infty$. Hence, we have that a disclination, initially located in a point different from the origin, will reach the boundary of $B_{1}(0)$ in the limit for $T\to+\infty$. We emphasize that this behavior contrasts with the one of one screw dislocation 
in $B_{1}(0)\backslash\{0\}$, which reaches the boundary in finite time \cite{Blass2015a,HM2017}. 

\subsection{``Two-disclination problem'': the interaction between disclinations}
\label{sec_TwoDisclinations}
We consider now the dynamics of two disclinations in $B_{1}(0)$, namely $D_{1} = (s_{1}, \Xi_{1})$ and $D_{2} = (s_{2}, \Xi_{2})$, thereby specializing \eqref{eq_CauchyProblem_Adimensionalized} to $N=2$, i.e.,
\begin{linenomath}
    \begin{align}
        \begin{cases}
            \dot{\Xi}_{1}(T) = \hat{F}_{1}(\Xi_{1}(T), \Xi_{2}(T)), & \dot{\Xi}_{2}(T) = \hat{F}_{2}(\Xi_{1}(T), \Xi_{2}(T)), \quad T>0,\\
            \Xi_{1}(0) = \Xi_{10}, &\Xi_{2}(0) = \Xi_{20},
        \end{cases}
        \label{eq_CauchyProblem_Adimensionalized_N2}
    \end{align}
\end{linenomath}
where the non-dimensional forces $\hat{F}_{1} \coloneqq -\nabla_{\Xi_{1}}\hat{H}$ and $\hat{F}_{2} \coloneqq -\nabla_{\Xi_{2}}\hat{H}$ read
\begin{linenomath}
    \begin{subequations}
        \begin{align}
            &\hat{F}_{1}(\Xi_{1},\Xi_{2}) = \hat{F}_{1}^{(1)}(\Xi_{1}) + \hat{F}_{1}^{(2)}(\Xi_{1}, \Xi_{2}) + \hat{F}_{1}^{(3)}(\Xi_{1}, \Xi_{2}),
            \label{eq_forces_ab_a}\\
            &\hat{F}_{2}(\Xi_{1},\Xi_{2}) = \hat{F}_{2}^{(1)}(\Xi_{2}) + \hat{F}_{2}^{(2)}(\Xi_{1}, \Xi_{2}) + \hat{F}_{2}^{(3)}(\Xi_{1}, \Xi_{2}).
            \label{eq_forces_ab_b}
        \end{align}
    \end{subequations}
\end{linenomath}
Similarly to the general case described above for $N$ disclinations, the force $\hat{F}_{1}$ acting on the disclination $D_{1}$ features three contributions: $\hat{F}_{1}^{(1)}$ is formally the same \emph{radial} force that would act on $D_{1}$ if the disclination $D_{2}$ were absent (see Section \ref{sec_DynamicsOneDisclination}); $\hat{F}_{1}^{(2)}$ is the action, modulated by $D_{2}$, that the boundary of $B_{1}(0)$ exerts on $D_{1}$; $\hat{F}_{1}^{(3)}$ is the \emph{mutual} interaction between $D_{1}$ and $D_{2}$. In particular, $\hat{F}_{1}^{(2)}$ is \emph{radial}, and $\hat{F}_{1}^{(3)}$ is directed along the segment connecting $\Xi_{1}$ and $\Xi_{2}$, while their signs depend solely on the product of the Frank angles $s_{1}s_{2}$\footnote{Indeed, within the range $0<\Phi_{12}<1$, it yields that $1-\Phi_{12}>0$ and $1-\Phi_{12}+\log\Phi_{12}<0$.}. The same discussion can be done also for the force featuring in \eqref{eq_forces_ab_b}.

If $s_{1}$ and $s_{2}$ have the same sign, i.e., if $s_{1}s_{2}>0$, then, for each disclination, the second force tends to move the disclination radially towards the boundary, in synergy with the first force, and the third force describes a \emph{repulsion} between the two disclinations. On the other hand, if $s_{1}$ and $s_{2}$ have opposite sign, i.e., if $s_{1}s_{2}<0$, the second force operates against the first one by pushing the disclinations towards the center, and the third force describes an \emph{attraction} between the two disclinations.

A consequence of the discussion reported above is that disclinations in an elastic medium behave qualitatively as \emph{charged particles} (see the notion of topological charges \cite{Qi2009}), with the Frank angles playing the role of electric charges. Thus, two disclinations with Frank angles of opposite sign attract each other, whereas two disclinations with Frank angles of the same sign repel each other. Moreover, if two disclinations come ``very close'' to each other (in a sense that will be rigorously explained later), then we say that the two disclinations \emph{collide} \cite{Blass2015a}, or \emph{merge} \cite{Qi2009}. Note that two disclinations colliding with each other do not superpose, since the superposition condition $\Xi_{1}=\Xi_{2}$ can only be reached in infinite time, provided certain conditions on the Frank angles of the disclinations are met. For this reason, if such conditions are respected, the superposition can be understood as the limit of a collision for time going towards infinity. In other words, even though two different disclinations may tend to intersect, their positions must always comply with the inequality $|\Xi_{1}(T)-\Xi_{2}(T)|>0$ for all times. Still, upon introducing a \emph{critical distance} $\varepsilon_{\mathrm{c}}>0$ between the centers of the two disclinations, there exists a critical time $T_{\mathrm{c}}$, which we refer to as \emph{collision time}, such that, for $T>T_{\mathrm{c}}$, and within a certain time range, it may occur that $0<|\Xi_{1}(T)-\Xi_{2}(T)|\leq \varepsilon_{\mathrm{c}}$. For the times at which this conditions 
are verified, we say that the two disclinations collide. Conversely, the collision ends if $|\Xi_{1}(T)-\Xi_{2}(T)|> \varepsilon_{\mathrm{c}}$.

To better understand the scenarios of a disclination approaching the boundary and of two superposing 
disclinations, we give the following Remarks:
\begin{remark}[A disclination goes to the boundary]\label{rem_LimitCase_1} Let us suppose 
that $D_{2}$ approaches the boundary, which amounts to taking the limit $|\Xi_{2}|\rightarrow 1^{-}$. Then, the force $\hat{F}_{2}$ tends to vanish and, accordingly, the evolution of $\Xi_{2}$ tends to stop, while the force $\hat{F}_{1}$ acting on $D_{1}$ tends to equal the first (radial) contribution of \eqref{eq_forces_ab_a}, i.e., $\hat{F}_{1}(\Xi_{1},\Xi_{2}) \sim 2s_{1}^{2}(1-|\Xi_{1}|^{2})\Xi_{1}$, since $D_{1}$ is the only disclination \emph{effectively} remaining in the system. It is important to highlight that, in fact, $\hat{F}_{1}$ becomes independent of the position taken by $D_{2}$ on the boundary. Expressed in the formalism of complex variables, $\hat{F}_{1}$ becomes independent of the ``phase'' of $\Xi_{2}$ (see Section \ref{sec_OneDisclination}).

\end{remark}

\begin{remark}[Superposition of two disclinations]\label{rem_LimitCase_2} Let us study the case in which two disclinations superpose at the same point $\Xi_{\mathrm{e}}\in B_{1}(0)$. This situation is handled by restarting from the definition of the energy of the system, which, in the limit $\Xi_{2}\rightarrow \Xi_{1}$, reads
\begin{linenomath}
        \begin{align}
            \check{H}(\Xi_{\mathrm{e}})\coloneqq\lim_{\Xi_{2}\rightarrow \Xi_{1}}\hat{H}(\Xi_{1},\Xi_{2})=\frac{1}{2}(s_{1}+s_{2})^{2}(1-|\Xi_{\mathrm{e}}|^{2})^{2}.
            \label{eq_energy_coexistence}
        \end{align}
\end{linenomath}
The limit $\Xi_{2}\rightarrow \Xi_{1}$ could describe some collisions for which $|\Xi_{1}(T)-\Xi_{2}(T)| \rightarrow 0$ as $T\rightarrow +\infty$, but, in general, the latter condition need not be true. Indeed, the operation $\Xi_{2}\rightarrow \Xi_{1}$ is rather understood here as virtually moving $D_{1}$ and $D_{2}$ until they superpose, but it does not have a dynamic meaning at this stage. In this regard, as anticipated in Section \ref{sec_unfixing}, two disclinations initially placed in the same position, so that $\Xi_{10} = \Xi_{20} = \Xi_{0}$, do not necessarily remain superposed for $T>0$, unless they are compelled to do so, or they annihilate each other, as is the case for $s_{1}+s_{2} = 0$. Indeed, if $s_{1}+s_{2} = 0$, then $\Xi_{1}(T) = \Xi_{2}(T) = \Xi_{0}$ for all $T>0$, independently from the value of $\Xi_{0}\in B_{1}(0)$. This can be proved by direct inspection of the forces in \eqref{eq_forces_ab_a} and \eqref{eq_forces_ab_b} for this very particular situation. However, if $s_{1}+s_{2} \neq 0$, each $\Xi_{0} \in B_{1}(0)\setminus\{0\}$ generates interactions with $\partial B_{1}(0)$ that break the initial superposition, thereby separating the two disclinations dynamically. Yet, although maintaining the conditions $s_{1}+s_{2} \neq 0$ and $\Xi_{0} \in B_{1}(0)\setminus\{0\}$, we may enforce a \emph{constraint} restricting the two disclinations to remain superposed at all times. This is useful, for instance, to quantify the reactive force necessary to maintain the superposition also for $T>0$. To account for this constrained setting, we introduce the \emph{holonomic} constraint  $\hat{C}_{\varepsilon}(\Xi_{1}, \Xi_{2}) \coloneqq \Xi_{1}-\Xi_{2} -\varepsilon = 0$, with $\varepsilon$ being a constant vector with arbitrarily small magnitude, and the \emph{augmented} Lagrangian function \cite{Lanczos1970a}
\begin{linenomath}
    \begin{align}
        \mathcal{L}_{\varepsilon}\equiv \hat{\mathcal{L}}_{\varepsilon}(\Xi_{1},\Xi_{2};\lambda) \coloneqq -\hat{H}(\Xi_{1}, \Xi_{2}) + \lambda\cdot\hat{C}_{\varepsilon}(\Xi_{1}, \Xi_{2}),
        \label{eq_Lagrangian}
    \end{align}
\end{linenomath}
with $\lambda$ being the two-dimensional Lagrange multiplier associated with the constraint. Note that the smallness parameter $\varepsilon$ is introduced to prevent the logarithm in the forces presented above to be ill-defined, and that the ``true'' Lagrangian of the system is given by $L\equiv \hat{L}(\Xi_{1},\Xi_{2})\coloneqq-\hat{H}(\Xi_{1},\Xi_{2})$ in the present framework (no inertial forces are attributed to the disclinations).
Because of the constraint in \eqref{eq_Lagrangian}, the dynamic problem in \eqref{eq_CauchyProblem_Adimensionalized_N2} changes into
\begin{linenomath}
\begin{subequations}
    \begin{align}
            \dot{\Xi}_{1}(T) &= \nabla_{\Xi_{1}}\hat{\mathcal{L}}_{\varepsilon}(\Xi_{1}(T),\Xi_{2}(T);\lambda(T)) = -\nabla_{\Xi_{1}}\hat{H}(\Xi_{1}(T), \Xi_{2}(T)) + \lambda(T),
            \label{eq_constraints_a}\\
            \dot{\Xi}_{2}(T) &= \nabla_{\Xi_{2}}\hat{\mathcal{L}}_{\varepsilon}(\Xi_{1}(T),\Xi_{2}(T);\lambda(T)) = -\nabla_{\Xi_{2}}\hat{H}(\Xi_{1}(T), \Xi_{2}(T)) - \lambda(T),
            \label{eq_constraints_b}\\
            0 &= \nabla_{\lambda}\hat{\mathcal{L}}_{\varepsilon}(\Xi_{1}(T),\Xi_{2}(T);\lambda(T)) = \Xi_{1}(T)-\Xi_{2}(T) - \varepsilon,
            \label{eq_constraints_c}
    \end{align}
\end{subequations}
\end{linenomath}
which holds for $T>0$. By differentiating \eqref{eq_constraints_c} with respect to time, the system in \eqref{eq_constraints_a}, \eqref{eq_constraints_b}, and \eqref{eq_constraints_c} can be rewritten in the decoupled form
\begin{linenomath}
\begin{subequations}
    \begin{align}
            &\dot{\Xi}_{1}(T) = -\tfrac{1}{2}\big[\nabla_{\Xi_{1}}\hat{H}\big(\Xi_{1}(T), \Xi_{1}(T)-\varepsilon\big) + \nabla_{\Xi_{2}}\hat{H}\big(\Xi_{1}(T), \Xi_{1}(T)-\varepsilon\big)\big],
            \label{eq_constraint_FinalEquations_a}\\
            &\dot{\Xi}_{2}(T) = \dot{\Xi}_{1}(T).
            \label{eq_constraint_FinalEquations_b}\\
            &\lambda(T) = -\nabla_{\Xi_{2}}\hat{H}\big(\Xi_{1}(T), \Xi_{1}(T)-\varepsilon\big) - \dot{\Xi}_{1}(T).
            \label{eq_constraint_FinalEquations_c}
    \end{align}
\end{subequations}
\end{linenomath}
The solution to   \eqref{eq_constraint_FinalEquations_a}, \eqref{eq_constraint_FinalEquations_b}, and \eqref{eq_constraint_FinalEquations_c} that comply with the initial conditions $\Xi_{1}(0) = \Xi_{10}$ and $\Xi_{2}(0) = \Xi_{10} - \varepsilon$ is the sequence of triples $(\Xi_{1}(T,\varepsilon),\Xi_{2}(T,\varepsilon),\lambda(T,\varepsilon))$, which, in the limit $\varepsilon\to 0$, converges to the solution of the problem
\begin{linenomath}
    \begin{align}
        \begin{cases}
        \dot{\Xi}_{\mathrm{e}}(T) = \hat{F}_{\mathrm{e}}(\Xi_{\mathrm{e}}(T)) = 2s_{\mathrm{e}}^{2}(1-|\Xi_{\mathrm{e}}(T)|^{2})\Xi_{\mathrm{e}}(T), &T>0,\\
        \lambda(T) = (s_{2}^{2}-s_{1}^{2})(1-|\Xi_{\mathrm{e}}(T)|^{2})\Xi_{\mathrm{e}}(T), &T>0,\\
        \Xi_{\mathrm{e}}(0) = \Xi_{\mathrm{e}0},
        \end{cases} &&s_{\mathrm{e}} \coloneqq \frac{s_{1}+s_{2}}{\sqrt{2}}.
        \label{eq_effective_equations}
    \end{align}
\end{linenomath}
Thus, if the constraint of ``persisting superposition'' is fulfilled, the two disclinations follow together the same kind of dynamics addressed in Section \ref{sec_OneDisclination}, as if they were a single disclination, initially positioned in $\Xi_{\mathrm{e}0}$, and with an ``effective'' Frank angle equal to $s_{\mathrm{e}} \coloneqq (s_{1}+s_{2})/\sqrt{2}$, which can be zero or different from zero. However, the force on the right-hand side of \eqref{eq_effective_equations}$_{1}$ is different from the one that would stem from the differentiation of the energy $\check{H}$ in \eqref{eq_energy_coexistence} with respect to $\Xi_{\mathrm{e}}$. The ``price to pay'' for maintaining this dynamics is given by the reaction forces $\lambda(T)$ and $-\lambda(T)$, which are characterized by the following property: they have the same functional dependence on $\Xi_{\mathrm{e}}$ as the force in \eqref{eq_effective_equations}$_{1}$, but they depend on the difference between the squares of the Frank angles of the two disclinations. Hence, the reactive forces vanish identically both when the Frank angles are equal to each other and when they are opposite to each other. This means that constraining the two disclinations to persist in their initial superposed configuration yields null reaction forces for the subsequent instants of time, since the two disclinations remain together naturally. This is because, in the just discussed situation, the superposition of the two disclinations annihilates the attractive or repulsive forces that they mutually exchange, and only modulates the interactions with the boundary. On the contrary, the reaction forces are non-null for $|s_{1}|\neq |s_{2}|$, and their magnitudes increase with the absolute value $|s_{2}^{2}-s_{1}^{2}|$, since the ``price'' for keeping them superposed increases with their tendency to separate.

\end{remark}

\paragraph{Two disclinations with zero total Frank angle} To better understand the behaviour of the ``two-disclination problem'', we now consider the case in which $s_{1} = -s_{2} = s$, and the disclinations are initially placed in $B_{1}(0)$ symmetrically with respect to the origin. In this case, \eqref{eq_CauchyProblem_Adimensionalized_N2} reduces to a Cauchy problem with a single ordinary differential equation in the non-dimensional distance $\Delta \coloneqq |\Xi_{1}-\Xi_{2}|$ between the two disclinations, and with initial condition $\Delta(0) = \Delta_{0} \coloneqq |\Xi_{10}-\Xi_{20}|$, i.e.,
\begin{linenomath}
    \begin{align}
        \begin{cases}
            \dot{\Delta}(T) = \hat{G}(\Delta(T)), &T>0,\\
            \Delta(0) = \Delta_{0},
        \end{cases}
        \label{eq_CauchyProblem_SymmetricScenario}
    \end{align}
\end{linenomath}
where the forcing term $\hat{G}(\Delta)\coloneqq \hat{F}_{1}(\Xi_{1}, \Xi_{2}) - \hat{F}_{2}(\Xi_{1}, \Xi_{2})$ is given by 
\begin{linenomath}
\begin{align}
    \hat{G}(\Delta) \coloneqq 4s^{2}\Delta\bigg[\frac{4-\Delta^{2}}{4+\Delta^{2}} + 2\log\frac{4\Delta}{4+\Delta^{2}}\bigg], &&\Delta\in\,]0,2[\,,
    \label{eq_CauchyProblem_SymmetricScenario_Force}
\end{align}
\end{linenomath}
and is obtained because, in the symmetric condition under investigation, it holds that $\Xi_{1}=(\frac{1}{2}\Delta,0)$ and $\Xi_{2}=(-\frac{1}{2}\Delta,0)$.

The function $\hat{G}$ in \eqref{eq_CauchyProblem_SymmetricScenario_Force} admits only one zero in the open interval $\,]0,2[\,$, computed numerically, and approximately given by $\Delta_{\mathrm{eq}} \simeq 0.8$. Since $\hat{G}(\Delta_{\mathrm{eq}})=0$, $\Delta_{\mathrm{eq}}$ is the sole equilibrium configuration for the system. This configuration is \emph{unstable} and divides the range of admissible values of $\Delta$ into two basins of attraction, as shown in Figure \ref{fig_SuperSymmetricScenario}. The instability of $\Delta_{\mathrm{eq}}$ descends from the fact that if we choose $\Delta_{0} < \Delta_{\mathrm{eq}}$ in the Cauchy problem \eqref{eq_CauchyProblem_SymmetricScenario}, then it holds that $\Delta(T) \to 0^{+}$ for $T\to+\infty$, which means that, if the initial distance between the two disclinations is smaller than the equilibrium one, it tends to decrease indefinitely, so that the disclinations collide and ``eventually'' superpose in the origin. Moreover, if $\Delta_{0} > \Delta_{\mathrm{eq}}$, then $\Delta(T) \to 2^{-}$ for $T\rightarrow +\infty$, i.e., the disclinations increase their reciprocal distance with respect to the initial one, and move towards the boundary.

What has been described so far can be formalized by taking the principal parts of $\hat{G}(\Delta)$ with respect to $\Delta \to 0^{+}$ and $\Delta \to 2^{-}$, which read 
\begin{linenomath}
    \begin{subequations}
        \begin{align}
            &\hat{G}(\Delta)/s^{2} \sim 8\Delta \, \log\!\Delta, && \Delta \to 0^{+},
            \label{eq_TwoDisclinations_8a}\\
            &\hat{G}(\Delta)/s^{2} \sim 4(2-\Delta), && \Delta \to 2^{-},
            \label{eq_TwoDisclinations_8b}
        \end{align}
   \end{subequations}
\end{linenomath}
and solving the corresponding approximated Cauchy problems in small neighborhoods $I^{+}(0)$ and $I^{-}(2)$ of the values $\Delta = 0$ and $\Delta = 2$. Thus, we have that 
\begin{linenomath}
    \begin{subequations}\label{eq_4.18}
        \begin{align}
            &\begin{cases}
                \dot{\Delta}(T) = 8s^{2}\Delta(T)\log\!\Delta(T),\\
                \Delta(0) = {\Delta}_{0} \in I^{+}(0),
            \end{cases}&& \Rightarrow \Delta(T) = \exp[(\log\Delta_{0})\exp(+8s^{2}T)],
            \label{eq_TwoDisclinations_10a}\\
            &\begin{cases}
                \dot{\Delta}(T) = 4s^{2}(2-\Delta(T)),\\
                \Delta(0) = {\Delta}_{0} \in I^{-}(2),
            \end{cases} &&\Rightarrow \Delta(T) = 2-(2-\Delta_{0})\exp(-4s^{2}T).
            \label{eq_TwoDisclinations_10b}
        \end{align}
   \end{subequations}
\end{linenomath}
The results in 
\eqref{eq_TwoDisclinations_10a} and \eqref{eq_TwoDisclinations_10b} 
imply that the values $\Delta = 0$ and $\Delta = 2$ are attractors for the system under study. In particular, although $\Delta = 0$ is not in the domain of $\hat{G}$, it describes the case of two superposing  disclinations with opposite Frank angles, and is consistent with the predictions of Remark \ref{rem_LimitCase_2} for $s_{\mathrm{e}}=0$ and $\Xi_{\mathrm{e}0}=0$ (the latter condition preserves the compliance with the geometric constraint that the two disclinations are placed symmetrically with respect to the origin).


%
\begin{figure}[htbp]
    \centering
    \begin{subfigure}{0.6\textwidth}
    \centering
    \includegraphics[width=\textwidth]{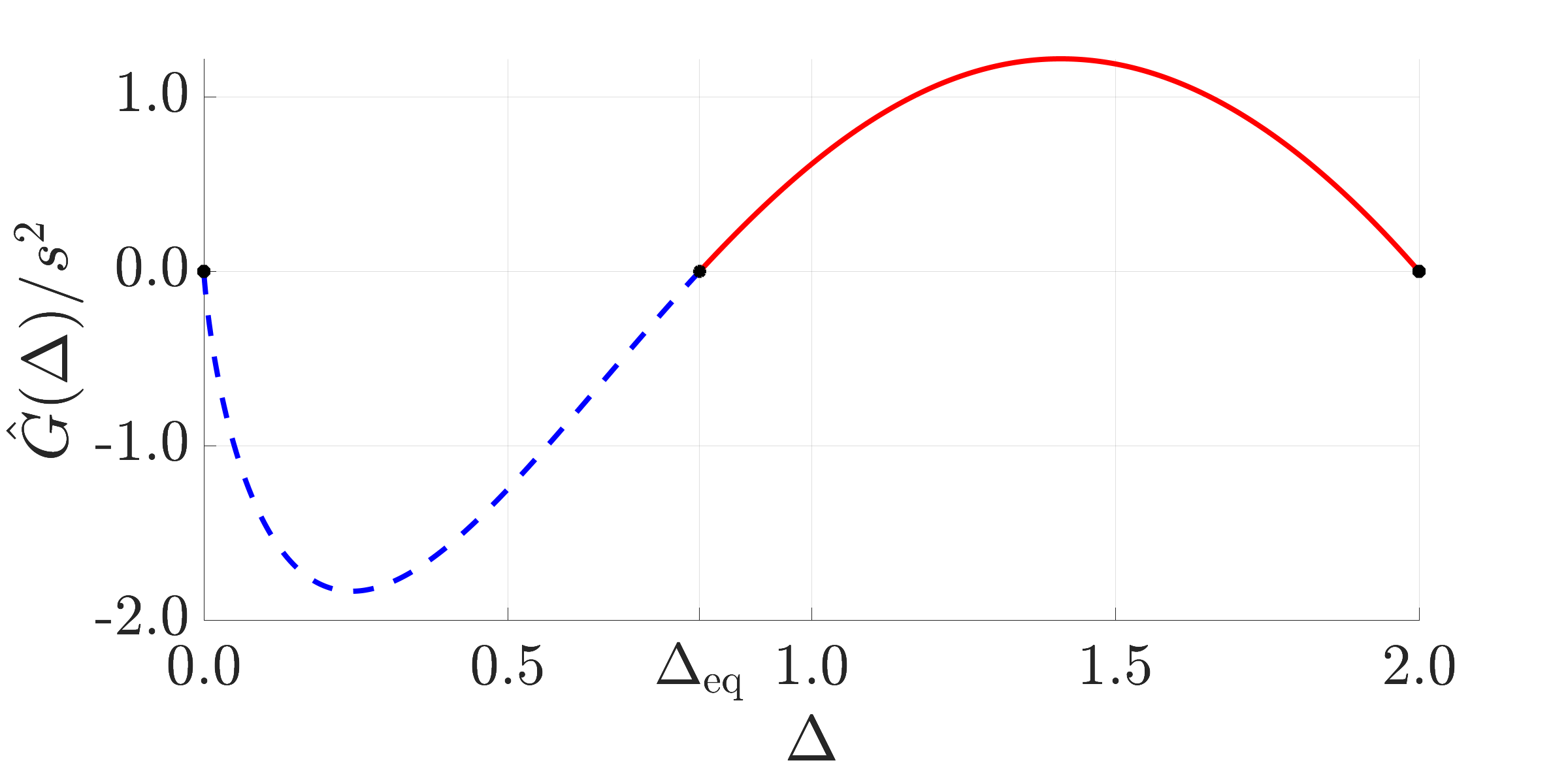}
    \end{subfigure}
    \hfill
    \begin{subfigure}{0.35\textwidth}
    \centering
    \includegraphics[width=\textwidth]{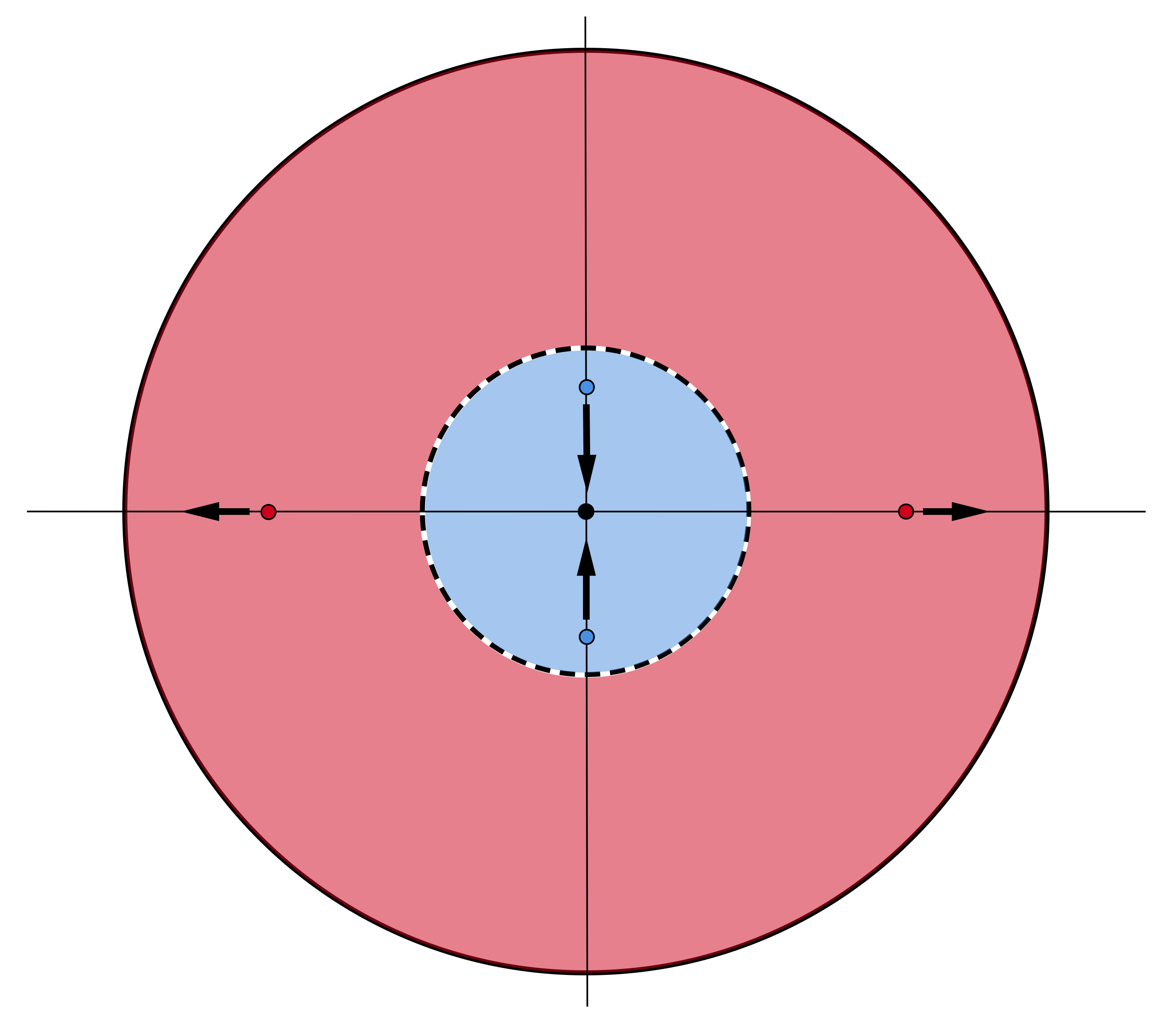}
    \end{subfigure}
    \caption{(Left) Plot of $\hat{G}(\Delta)/s^{2}$ in \eqref{eq_CauchyProblem_SymmetricScenario}, and the basins of attraction for $\Delta = 0$ (dashed line) and $\Delta = 2$ (solid line). (Right) Sketch of the motion of the disclinations, which come closer for $\Delta_{0}<\Delta_{\mathrm{eq}}$, and move apart for  $\Delta_{0}>\Delta_{\mathrm{eq}}$.}
    \label{fig_SuperSymmetricScenario}
\end{figure}

Even more importantly, from the just discussed situation (two disclinations with opposite Frank angles and symmetrically placed) we deduce that the value $\Delta_{\mathrm{eq}}$ is the system's \emph{critical distance} such that two behaviors are observed: On the one hand, for $\Delta_{0}<\Delta_{\mathrm{eq}}$, the attractive forces between the disclinations are predominant with respect to the effect of the boundary, and, thus, as one would expect, the disclinations tend to come indefinitely closer to each other; on the other hand, for $\Delta_{0}>\Delta_{\mathrm{eq}}$, the presence of the boundary overcomes the attractive forces, and, even though the disclinations have opposite Frank angles, they move apart.

\section{Modeling the crystalline structure}\label{sec_glideDirections}
This section is devoted to studying the dynamical system in \eqref{eq_CauchyProblem_Adimensionalized} 
in 
the presence of preferred directions for the motion, in a similar fashion to the concept of \emph{glide directions}, as introduced in \cite{Cermelli1999a} and studied in \cite{Blass2015a}. 
While there is no strong consensus supported by experimental evidence 
that this be the case 
\cite{RV1983},
there is nonetheless evidence that their dynamics is indeed influenced by these slip systems.
In metal alloys and elastic crystals, the mechanism of twin deformation has been modeled in terms of the motion of disclination dipoles. It is well known that the distortion induced by disclination dipoles is effectively comparable to that induced by a single edge dislocation \cite{Cesana2024a,Eshelby66}. Since dislocation kinetics is practically constrained by the glide directions of slip systems, it seems reasonable to consider the motion of these disclination dipoles as also constrained by a system of glide directions.
Even in the context of graphene, there are indications that disclination motion depends on preferred directions within the hexagonal lattice and specific angles. The mechanism of nucleation of single disclinations and their kinetics in graphene is postulated in \cite{AJE2021}
to involve initial 90-degree bond rotations, followed by the subsequent migration of dislocation-disclination dipole systems (for further work in this direction see e.g. \cite{Yavari_MRC, Yavari_PRSA}).

In this section, we denote by $\boldsymbol{\Xi}=(\Xi_1,\ldots,\Xi_N)\in [B_1(0)]^N$ the configuration of the $N$ disclinations and we shall abandon the ``hat'' notation; 
recalling \eqref{eq_ExistenceTheorem_2}, the system in \eqref{eq_CauchyProblem_Adimensionalized} reads then (with $\boldsymbol{F}(\boldsymbol{\Xi}) \coloneqq (F_{1}(\boldsymbol{\Xi}),\ldots,F_{N}(\boldsymbol{\Xi})) \in \mathbb{R}^{2N}$)
\begin{equation}\label{eq:100}
\begin{cases}
    \dot{\boldsymbol{\Xi}}(T)=\boldsymbol{F}(\boldsymbol{\Xi}(T)), & T>0, \\
    \boldsymbol{\Xi}(0)=\boldsymbol{\Xi}_{0}.
\end{cases}
\end{equation}

Let $\Gcal$ 
be a set of  $M\in\mathbb{N}$ unit vectors, 
with the requirements that $\mathrm{Span}\,\Gcal = \mathbb{R}^{2}$ and that $g\in\Gcal \iff -g\in\Gcal$. 
The 
directions $g_k(\boldsymbol{\Xi})\in\Gcal$ along which the disclination $D_k=(s_k,\Xi_k)$ moves are determined as the ones along which the force~$F_k$ is the most aligned, namely
%
\begin{equation}\label{g_k}
g_k(\boldsymbol{\Xi})\in \arg\max_{g\in\Gcal}\{F_{k}(\boldsymbol{\Xi})\cdot g\}\,;
\end{equation}
we let $ \Gcal_k(\boldsymbol{\Xi})\coloneqq \{g_k(\boldsymbol{\Xi})\in\Gcal : \text{\eqref{g_k} holds}\}$.
Finally, we define
\begin{linenomath}
    \begin{align}
            \mathcal{F}_{k}(\boldsymbol{\Xi}) = \big\{(F_{k}(\boldsymbol{\Xi})\cdot g_k(\boldsymbol{\Xi}))g_k(\boldsymbol{\Xi})
            \,\big|\,g_k(\boldsymbol{\Xi})\in \Gcal_k(\boldsymbol{\Xi})\big\}.
            \label{eq_GlideDirections_1}
        \end{align}
\end{linenomath}
%
For geometric reasons, there are only three possible scenarios:
\begin{enumerate}
    \item[(1)] If $F_{k}(\boldsymbol{\Xi}) = 0$ (the force on disclination $D_k$ vanishes), then 
    $\Gcal_{k}(\boldsymbol{\Xi}) = \Gcal$, and trivially 
    $\mathcal{F}_{k}(\boldsymbol{\Xi}) = \{0\}$.
    \item[(2)] If $F_{k}(\boldsymbol{\Xi}) \neq 0$ and there is only one direction maximizing $F_{k}(\boldsymbol{\Xi})\cdot g$ in \eqref{g_k}, namely 
    $\Gcal_{k}(\boldsymbol{\Xi}) = \{g_{k}(\boldsymbol{\Xi})\}$, then $\mathcal{F}_{k}(\boldsymbol{\Xi}) = \{(F_{k}(\boldsymbol{\Xi})\cdot g_{k}(\boldsymbol{\Xi}))g_{k}(\boldsymbol{\Xi})\}$.
    \item[(3)] If $F_{k}(\boldsymbol{\Xi}) \neq 0$ and there exist two different 
    directions both maximizing $F_{k}(\boldsymbol{\Xi})\cdot g$ in \eqref{g_k}, namely 
    $\Gcal_{k}(\boldsymbol{\Xi}) = \{g_{k}^{-}(\boldsymbol{\Xi}), g_{k}^{+}(\boldsymbol{\Xi})\}$, then $\mathcal{F}_{k}(\boldsymbol{\Xi}) = \{(F_{k}(\boldsymbol{\Xi})\cdot g_{k}^{\pm}(\boldsymbol{\Xi}))g_{k}^{-}(\boldsymbol{\Xi}), (F_{k}(\boldsymbol{\Xi})\cdot g_{k}^{\pm}(\boldsymbol{\Xi}))g_{k}^{+}(\boldsymbol{\Xi})\}$. This case occurs when the force $F_k(\boldsymbol{\Xi})$ bisects the angle formed by $g_k^-(\boldsymbol{\Xi})$ and $g_k^+(\boldsymbol{\Xi})$.
\end{enumerate}
These three cases can be resumed in the following formula:
\begin{linenomath}
        \begin{align}
            \mathcal{F}_{k}(\boldsymbol{\Xi}) = 
\begin{cases}
    \{0\}, &\text{if $F_{k}(\boldsymbol{\Xi}) = 0$},\\
    \{(F_{k}(\boldsymbol{\Xi})\cdot g_{k}(\boldsymbol{\Xi}))g_{k}(\boldsymbol{\Xi})\}, &\text{if $F_{k}(\boldsymbol{\Xi}) \neq 0$ and $\Gcal_k(\boldsymbol{\Xi}) = \{g_{k}(\boldsymbol{\Xi})\}$},\\
    \{(F_{k}(\boldsymbol{\Xi})\cdot g_{k}^{\pm}(\boldsymbol{\Xi}))g_{k}^{\pm}(\boldsymbol{\Xi})\}, &\text{if $F_{k}(\boldsymbol{\Xi}) \neq 0$ and $\Gcal_{k}(\boldsymbol{\Xi}) = \{g_{k}^{\pm}(\boldsymbol{\Xi})\}$},
\end{cases}
            \label{eq_GlideDirections_3}
        \end{align}
\end{linenomath}
and \eqref{eq:100}
takes the form of the following differential inclusion:
\begin{linenomath}
\begin{equation}\label{eq_GlideDirections_2}
        \begin{cases}
            \dot{\boldsymbol{\Xi}}(T) \in \boldsymbol{\mathcal{F}}(\boldsymbol{\Xi}(T)) & T>0,\\
            \boldsymbol{\Xi}(0) = \boldsymbol{\Xi}_{0}\,,
        \end{cases}
   \end{equation}
\end{linenomath}
where 
$\boldsymbol{\mathcal{F}}(\boldsymbol{\Xi})\defeq \mathcal{F}_{1}(\boldsymbol{\Xi})\times\cdots\times\mathcal{F}_{N}(\boldsymbol{\Xi})\subset \mathbb{R}^{2N}$;
notice that \eqref{eq_GlideDirections_2} is a dynamics in $\mathbb{R}^{2N}$, and it is indeed a differential inclusion when scenario (3) occurs for at least one $k\in\{1,\ldots,N\}$.

The set-valued function $\boldsymbol{\mathcal{F}}$
is defined on the set $
[B_{1}(0)]^{N}\setminus\Pi$, which makes each $F_k$ well defined; 
nonetheless, recalling the arguments in 
Section \ref{sec_nondim} and Remark \ref{rem_LimitCase_1}, 
it is possible to extend each $F_k$ by continuity to the whole closed unit disk, so that we will consider $\mathrm{dom}\,\boldsymbol{\mathcal{F}}=
\big[\overline{B}_{1}(0)\big]^{N}$; 
the set-valued function $\boldsymbol{\mathcal{F}}$ turns out to be upper semi-continuous (in the sense of \cite[p.~65]{Filippov}; see \cite[Lemma~2.11]{Blass2015a} for a proof).
The theory of differential inclusions \cite{Filippov} provides a suitable notion of solution to \eqref{eq_GlideDirections_2}.
\begin{definition}\label{def_sol_diff_inclusion}
A solution to \eqref{eq_GlideDirections_2} is
an absolutely continuous curve $T\mapsto \boldsymbol{\Xi}(T)\in [B_1(0)]^N$ such that 
\begin{linenomath}
    \begin{equation}\label{eq_GlideDirections_4}
        \begin{cases}
        \dot{\boldsymbol{\Xi}}(T) \in \mathrm{co}\,\boldsymbol{\mathcal{F}}(\boldsymbol{\Xi}(T)), & T>0, \\
        \boldsymbol{\Xi}(0) = \boldsymbol{\Xi}_{0}\in[B_1(0)]^N\setminus\Pi, 
        \end{cases}
        \end{equation}
\end{linenomath}
where $\mathrm{co}\,\boldsymbol{\mathcal{F}}$ is the convex hull of the set $\boldsymbol{\mathcal{F}}$.
\end{definition}
%
%
Our case is analogous to that of  \cite[Lemma~2.10]{Blass2015a}, therefore $\mathrm{co}\,\boldsymbol{\mathcal{F}} = (\mathrm{co}\,\mathcal{F}_{1}(\boldsymbol{\Xi}))\times\cdots\times(\mathrm{co}\,\mathcal{F}_{N}(\boldsymbol{\Xi}))\subset \mathbb{R}^{2N}$ for all $\boldsymbol{\Xi}\in [\overline{B}_{1}(0)]^{N}$, and, from \eqref{eq_GlideDirections_3}, we have that for each $k = 1,\ldots,N$ the term $\mathrm{co}\,\mathcal{F}_{k}(\boldsymbol{\Xi})$ is 
\begin{linenomath}
        \begin{align}
            \mathrm{co}\,\mathcal{F}_{k}(\boldsymbol{\Xi}) = 
\begin{cases}
    \{0\}, &\textrm{if $F_{k}(\boldsymbol{\Xi}) = 0$},\\
    \{(F_{k}(\boldsymbol{\Xi})\cdot g_{k}(\boldsymbol{\Xi}))g_{k}(\boldsymbol{\Xi})\}, &\textrm{if $F_{k}(\boldsymbol{\Xi}) \neq 0$ and $\Gcal_{k}(\boldsymbol{\Xi}) = \{g_{k}(\boldsymbol{\Xi})\}$},\\
\Sigma_{k}(\boldsymbol{\Xi}), &\textrm{if $F_{k}(\boldsymbol{\Xi}) \neq 0$ and $\Gcal_{k}(\boldsymbol{\Xi}) = \{g_{k}^{\pm}(\boldsymbol{\Xi})\}$},
\end{cases}
\label{eq_GlideDirections_5}
        \end{align}
\end{linenomath}
where $\Sigma_{k}(\boldsymbol{\Xi})$ is the segment joining $(F_{k}(\boldsymbol{\Xi})\cdot g_{k}^{\pm}(\boldsymbol{\Xi}))g_{k}^{-}(\boldsymbol{\Xi})$ and $(F_{k}(\boldsymbol{\Xi})\cdot g_{k}^{\pm}(\boldsymbol{\Xi}))g_{k}^{+}(\boldsymbol{\Xi})$.
We stress that scenario (3) must occur for having $\mathrm{co}\,\mathcal{F}_{k}(\boldsymbol{\Xi})=\Sigma_k(\boldsymbol{\Xi})$.

We can now state the local existence 
theorem.
\begin{theorem}[{see \cite[Theorem~2.13]{Blass2015a}}]
    Let $\boldsymbol{\mathcal{F}}\colon 
    [\overline{B}_1(0)]^N\to\mathcal{P}(\mathbb{R}^{2N})$ be defined as above 
    and let $\boldsymbol{\Xi}_0\in [{B}_1(0)]^N\setminus\Pi$. 
    Then there exists $T^*>0$ such that the \eqref{eq_GlideDirections_2} of motion admits a solution $[-T^*,T^*]\ni T\mapsto\boldsymbol{\Xi}(T)\in [{B}_1(0)]^N\setminus\Pi$ in the sense of Definition \ref{def_sol_diff_inclusion}.
\end{theorem}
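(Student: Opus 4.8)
The plan is to recognize the statement as a direct instance of the classical local existence theorem for differential inclusions with an upper semi-continuous, convex- and compact-valued right-hand side (Filippov's theorem, in the form cited from \cite[Theorem~2.13]{Blass2015a}), supplemented by a short-time confinement argument that keeps the trajectory inside the open admissible set $[B_1(0)]^N\setminus\Pi$. Concretely, I would first verify the three structural hypotheses required by that theorem for the convexified map $\mathrm{co}\,\boldsymbol{\mathcal{F}}$: (i) it has nonempty, compact, convex values; (ii) it is bounded; (iii) it is upper semi-continuous. Once these are in place, the theorem produces an absolutely continuous curve solving \eqref{eq_GlideDirections_4} on some interval, and it only remains to shrink the time interval so that the curve neither hits $\partial B_1(0)$ nor enters $\Pi$.

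For hypothesis (i), I would read off the values directly from \eqref{eq_GlideDirections_5}: for each $k$, the set $\mathrm{co}\,\mathcal{F}_k(\boldsymbol{\Xi})$ is either the singleton $\{0\}$, another singleton $\{(F_k(\boldsymbol{\Xi})\cdot g_k(\boldsymbol{\Xi}))g_k(\boldsymbol{\Xi})\}$, or the segment $\Sigma_k(\boldsymbol{\Xi})$, each of which is nonempty, compact and convex; since $\mathrm{co}\,\boldsymbol{\mathcal{F}}=\prod_{k=1}^{N}\mathrm{co}\,\mathcal{F}_k$, the product inherits these properties. For (ii), I would use that every $F_k$ has been extended by continuity to the compact set $[\overline{B}_1(0)]^N$, hence is bounded there by some constant $K$; because each element of $\mathcal{F}_k(\boldsymbol{\Xi})$ has the form $(F_k(\boldsymbol{\Xi})\cdot g)g$ with $|g|=1$, its norm does not exceed $|F_k(\boldsymbol{\Xi})|\le K$, so $\mathrm{co}\,\boldsymbol{\mathcal{F}}$ is uniformly bounded by $K\sqrt{N}$. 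Hypothesis (iii) is exactly the upper semi-continuity already recorded just before Definition \ref{def_sol_diff_inclusion} (with the proof imported from \cite[Lemma~2.11]{Blass2015a}), and upper semi-continuity is preserved under finite Cartesian products. With (i)--(iii) verified, the cited existence theorem yields a local absolutely continuous solution $T\mapsto\boldsymbol{\Xi}(T)$ through $\boldsymbol{\Xi}_0$.

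It then remains to confine the solution. The admissible set $[B_1(0)]^N\setminus\Pi$ is open (as $[B_1(0)]^N$ is open and each $\Pi_{kh}=\{\Xi_k=\Xi_h\}$ is relatively closed) and contains $\boldsymbol{\Xi}_0$, so there is $r>0$ with $\overline{B}_r(\boldsymbol{\Xi}_0)\subset[B_1(0)]^N\setminus\Pi$. The velocity bound from (ii) yields the Lipschitz estimate $|\boldsymbol{\Xi}(T)-\boldsymbol{\Xi}_0|\le K\sqrt{N}\,|T|$, which holds also for negative times because the time-reversed inclusion $\dot{\boldsymbol{\Xi}}\in-\mathrm{co}\,\boldsymbol{\mathcal{F}}(\boldsymbol{\Xi})$ enjoys the same properties. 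Choosing $T^*\le r/(K\sqrt{N})$, and no larger than the existence interval, guarantees $\boldsymbol{\Xi}(T)\in\overline{B}_r(\boldsymbol{\Xi}_0)\subset[B_1(0)]^N\setminus\Pi$ for all $|T|\le T^*$, which is the claim. The only genuine content beyond invoking the abstract theorem is this confinement step: the regularity of $\boldsymbol{\mathcal{F}}$ near $\Pi$ is delicate in principle, but it has already been discharged by the cited lemma, so the main point to get right here is simply that the bounded velocity prevents the trajectory from reaching the singular set $\Pi$ or the boundary $\partial B_1(0)$ in short time.
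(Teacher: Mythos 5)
Your proposal is correct and follows essentially the same route the paper takes: the paper supplies no proof of its own, deferring entirely to \cite[Theorem~2.13]{Blass2015a}, which is precisely the Filippov-type local existence argument you reconstruct (upper semi-continuity, nonempty compact convex values, boundedness of $\mathrm{co}\,\boldsymbol{\mathcal{F}}$, followed by a short-time confinement of the trajectory away from $\partial B_1(0)$ and the superposition set $\Pi$). Both your verification of the structural hypotheses and the confinement step via the uniform velocity bound are sound.
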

To discuss the local uniqueness of the solution, we distinguish two possibilities: 
if either case (1) or case (2) occurs, namely if either $\mathrm{co}\,\mathcal{F}_{k}(\boldsymbol{\Xi}) = \{0\}$ or $\mathrm{co}\,\mathcal{F}_{k}(\boldsymbol{\Xi}) = \{(F_{k}(\boldsymbol{\Xi})\cdot g_{k}(\boldsymbol{\Xi}))g_{k}(\boldsymbol{\Xi})\}$, for all $k=1,\ldots,N$, then uniqueness is immediate owing to the Lipschitz continuity of the $\mathbb{R}^{2N}$-valued function $\boldsymbol{\mathcal{F}}$, 
whereas two distinct phenomena can occur in case (3), namely when there exists (at least one index) $k\in\{1,\ldots,N\}$ 
such that $\mathrm{co}\,\mathcal{F}_{k}(\boldsymbol{\Xi}) = \Sigma_{k}(\boldsymbol{\Xi})$. 
This last situation includes the phenomena that in the context of dislocation dynamics are called \emph{cross-slip} and \emph{fine cross-slip}: they were pictured in \cite{Cermelli1999a} and have been addressed mathematically in \cite{Blass2015a}.

To tackle uniqueness in the latter case, we need to look carefully at the condition leading to $\mathrm{co}\,\mathcal{F}_{k}(\boldsymbol{\Xi})=\Sigma_k(\boldsymbol{\Xi})$; from \eqref{eq_GlideDirections_5}, this occurs when $\mathrm{card}\,\mathcal{G}_k(\boldsymbol{\Xi})=2$, so that we define $\mathcal{A}_k\coloneqq\big\{\boldsymbol{\Xi}\in[\overline{B}_1(0)]^N:\mathrm{card}\,\mathcal{G}_k(\boldsymbol{\Xi})=2\big\}$ and $\mathcal{A}\coloneqq \cup_{k=1}^N \mathcal{A}_k$ (cf. \cite[formula (2.19)]{Blass2015a}). 
Given that in case (3) $F_{k}(\boldsymbol{\Xi})\neq0$, the sets $\mathcal{A}_k$ are identified by the condition $F_{k}(\boldsymbol{\Xi})\cdot g_0(\boldsymbol{\Xi})=0$, for the 
vector $g_0(\boldsymbol{\Xi})\coloneqq g_k^+(\boldsymbol{\Xi})-g_k^-(\boldsymbol{\Xi})$.
Invoking the analitycity results for biharmonic functions contained in \cite{Mus1919}, we obtain that $\mathcal{F}_{k}(\boldsymbol{\Xi})$ are analytic, so that all of the $\mathcal{A}_k$'s are locally smooth manifolds; this yields that the normal unit vector $\mathbf{N}(\boldsymbol{\Xi})$ is well defined for almost\footnote{Except at those points at which $\mathcal{A}_k$ and $\mathcal{A}_h$ intersect transversally for $h\neq k$. These points, however, belong to a set of codimension greater than or equal to $2$ in $\mathbb{R}^{2N}$.} all $\boldsymbol{\Xi}\in \mathcal{A}$ (and we assume, without loss of generality, that it points from the side of $\mathcal{A}_k$ where $g_k^-(\boldsymbol{\Xi})$ is the only direction satisfying \eqref{g_k} to the side of $\mathcal{A}_k$ where $g_k^+(\boldsymbol{\Xi})$ is the only direction satisfying \eqref{g_k}). 
Accordingly, given $\widehat{\boldsymbol{\Xi}}\in \mathcal{A}_k$ and $B$ a neighborhood of $\widehat{\boldsymbol{\Xi}}$, we denote by $B^+$ the ``half'' of $B$ on the side of $\mathcal{A}_k$ containing $\boldsymbol{N}(\widehat{\boldsymbol{\Xi}})$ and by $B^-$ the other half and we let 
\begin{linenomath}
\begin{align}
F_k^\pm(\boldsymbol{\Xi})\coloneqq ( 
F_k(\boldsymbol{\Xi})\cdot g_k^\pm(\widehat{\boldsymbol{\Xi}}))g_k^\pm(\widehat{\boldsymbol{\Xi}})\qquad \text{for $\boldsymbol{\Xi}\in B^\pm$} \nonumber
    \end{align}
\end{linenomath}
and we extend it smoothly to points $\boldsymbol{\Xi}\in B\cap\mathcal{A}_k$ (cf. \cite[formula (2.29)]{Blass2015a}); we let $\boldsymbol{F}^\pm(\boldsymbol{\Xi})$ be the corresponding forces.
Then, if at time $\widehat{T}$ it occurs that  $\boldsymbol{\Xi}(\widehat{T})\eqqcolon \widehat{\boldsymbol{\Xi}}\in \mathcal{A}_k$, and $\boldsymbol{\Xi}(T)\in B^-$ for $0<\widehat{T}-T$ sufficiently small, $\boldsymbol{\Xi}(T)\in B^+$ for $0<T-\widehat{T}$ sufficiently small, we have: 
\begin{enumerate}
    \item[(a)] \emph{cross-slip} (see \cite[Theorem~2.19]{Blass2015a}): the disclination $D_k$ switches from moving along direction $g_k^-(\boldsymbol{\Xi})$ to moving along direction $g_k^+(\boldsymbol{\Xi})$  if 
    \begin{linenomath}
        \begin{align}
    \boldsymbol{F}^-(\widehat{\boldsymbol{\Xi}})\cdot \boldsymbol{N}(\widehat{\boldsymbol{\Xi}})>0\qquad\text{and}\qquad  \boldsymbol{F}^+(\widehat{\boldsymbol{\Xi}})\cdot\boldsymbol{N}(\widehat{\boldsymbol{\Xi}})>0.\nonumber
        \end{align}
    \end{linenomath}
    \item[(b)] \emph{Fine cross-slip} (see \cite[Theorem~2.20]{Blass2015a}): the disclination $D_k$ switches from moving along direction $g_k^-(\boldsymbol{\Xi})$ to moving along a convex combination $g_k^0(\boldsymbol{\Xi})\notin\Gcal$ of directions $g_k^-(\boldsymbol{\Xi})$ and $g_k^+(\boldsymbol{\Xi})$ if 
    \begin{linenomath}
        \begin{align}
    \boldsymbol{F}^-(\widehat{\boldsymbol{\Xi}})\cdot \boldsymbol{N}(\widehat{\boldsymbol{\Xi}})>0\qquad\text{and}\qquad  \boldsymbol{F}^+(\widehat{\boldsymbol{\Xi}})\cdot\boldsymbol{N}(\widehat{\boldsymbol{\Xi}})<0.\nonumber
        \end{align}
    \end{linenomath}
    The force $\boldsymbol{F}^0(\boldsymbol{\Xi})\in\mathrm{co}\,\boldsymbol{\mathcal{F}}(\boldsymbol{\Xi})$ is determined by  $\boldsymbol{F}^0(\boldsymbol{\Xi})= (1-\alpha(\boldsymbol{\Xi}))\boldsymbol{F}^-(\boldsymbol{\Xi})+\alpha(\boldsymbol{\Xi})\boldsymbol{F}^+(\boldsymbol{\Xi})$, where
    \begin{linenomath}
        \begin{align}
            \alpha(\boldsymbol{\Xi})= \frac{\boldsymbol{F}^-(\boldsymbol{\Xi})\cdot \boldsymbol{N}(\boldsymbol{\Xi})}{\boldsymbol{F}^-(\boldsymbol{\Xi})\cdot \boldsymbol{N}(\boldsymbol{\Xi})-\boldsymbol{F}^+(\boldsymbol{\Xi})\cdot \boldsymbol{N}(\boldsymbol{\Xi})}.\nonumber
        \end{align}
    \end{linenomath}
    This motion occurs for $T>\widehat{T}$ and is confined to the set $\mathcal{A}_k$ (which makes the formula for $\alpha(\boldsymbol{\Xi})$ well defined) and abandons it (tangentially to $\mathcal{A}_k(\boldsymbol{\Xi})$) to continue in the direction $g_k^+(\boldsymbol{\Xi})$.
\end{enumerate}
Both scenarios (a) and (b) described above, as well as the result for right uniqueness in the case of single-valued forces $\mathcal{F}_k(\boldsymbol{\Xi})$ satisfy the conditions for right uniqueness of \cite[Theorem~2.17]{Blass2015a}. We refer the interested reader to \cite[Section~2]{Blass2015a} for the precise statements of these theorems and for some technical mathematical details that we omitted in our discussion, without consequences for the understanding of the dynamics.

\section{Benchmarks}
\label{sec_simulations}
In this section, we report on five \emph{benchmark} problems that we deem remarkable. 

\paragraph{\#1. Two disclinations with non-zero total Frank angle (Figure \ref{fig_dipolo})} We solve \eqref{eq_CauchyProblem_Adimensionalized_N2} in the case of the two disclinations with Frank angles of opposite sign and different magnitude, and symmetrically placed with respect to the origin of $B_{1}(0)$ at the initial reciprocal distance $\Delta_{0} = 0.6$. In the sequel, we denote by $D_{1}$ the disclination with positive Frank angle $s_{1}=+1$ and by $D_{2}$ the disclination with negative Frank angle $s_{2} = - 2$. In a relatively small right neighborhood of $T=0$, denoted by $\mathcal{U}^{+}(0)$, the magnitude of the force $\hat{F}_{2}^{(3)}$, which describes the reciprocal attraction between $D_{1}$ and $D_{2}$, is rather strong, and tends to bring the two disclinations one near the other. In fact, since $\hat{F}_{2}^{(3)}$ is negative, it tends to push $D_{2}$ to the left (see Figure \ref{fig_dipolo}). Also $\hat{F}_{2}^{(2)}$, being negative, cooperates to displace $D_{2}$ leftwards. However, since $\hat{F}_{2}^{(1)}$ is positive and greater in magnitude than the sum between $\hat{F}_{2}^{(2)}$ and $\hat{F}_{2}^{(3)}$, the overall behavior of $D_{2}$ at the initial instants of time is a motion towards the right, 
which is consistent with the positive sign of the total force $\hat{F}_{2}$ in $\mathcal{U}^{+}(0)$, as shown in the right panel of Figure \ref{fig_dipolo}. Note also that $D_{1}$ is pulled to the right, since $\hat{F}_{1}$ is positive. As time goes by, $\hat{F}_{2}$ decreases monotonically, passes through zero, becomes negative, reaches a global minimum, and then grows again, thereby becoming positive. In correspondence to its global minimum, which is around $T=0.2$, the magnitude of $\hat{F}_{2}$ attains the maximum attractive strength, and the same occurs for $\hat{F}_{1}$, which reaches the absolute maximum at the same instant of time at which $\hat{F}_{2}$ is minimum and, thus, gives $D_{1}$ the strongest pull. This behavior explains why $D_{2}$ moves towards the left within the time interval bounded by the two consecutive zeroes of $\hat{F}_{2}$. Afterwards, as $\hat{F}_{2}$ becomes positive, and remains such, it pulls $D_{2}$ towards the right, and so does $\hat{F}_{1}$ with $D_{1}$, by means of a force having approximately the same intensity as $\hat{F}_{2}$ (see Figure \ref{fig_dipolo}). This is the reason why the two disclinations move close to each other, and no collision occurs. We emphasize that, within this time window, the attractive force $\hat{F}_{2}^{(1)}$ (marked with triangles in Figure \ref{fig_dipolo}) contributes the most to the motion of $D_{2}$, thereby making the influence of the boundary stronger than the other forces.


\begin{figure}[htbp]
    \centering
    \begin{minipage}{0.32\textwidth}
        \centering
        \includegraphics[width=\textwidth]{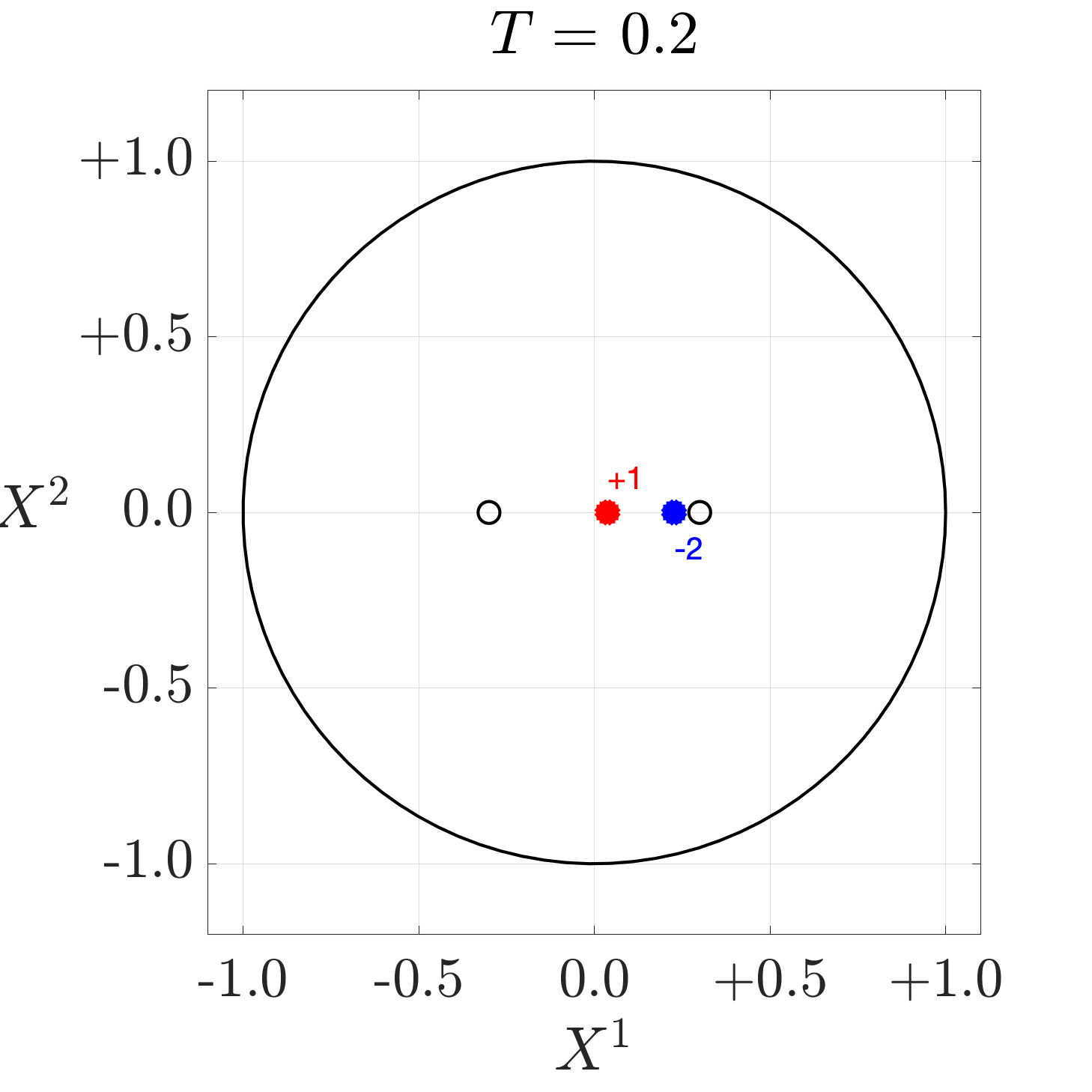}
    \end{minipage}\hfill
    \begin{minipage}{0.32\textwidth}
        \centering
        \includegraphics[width=\textwidth]{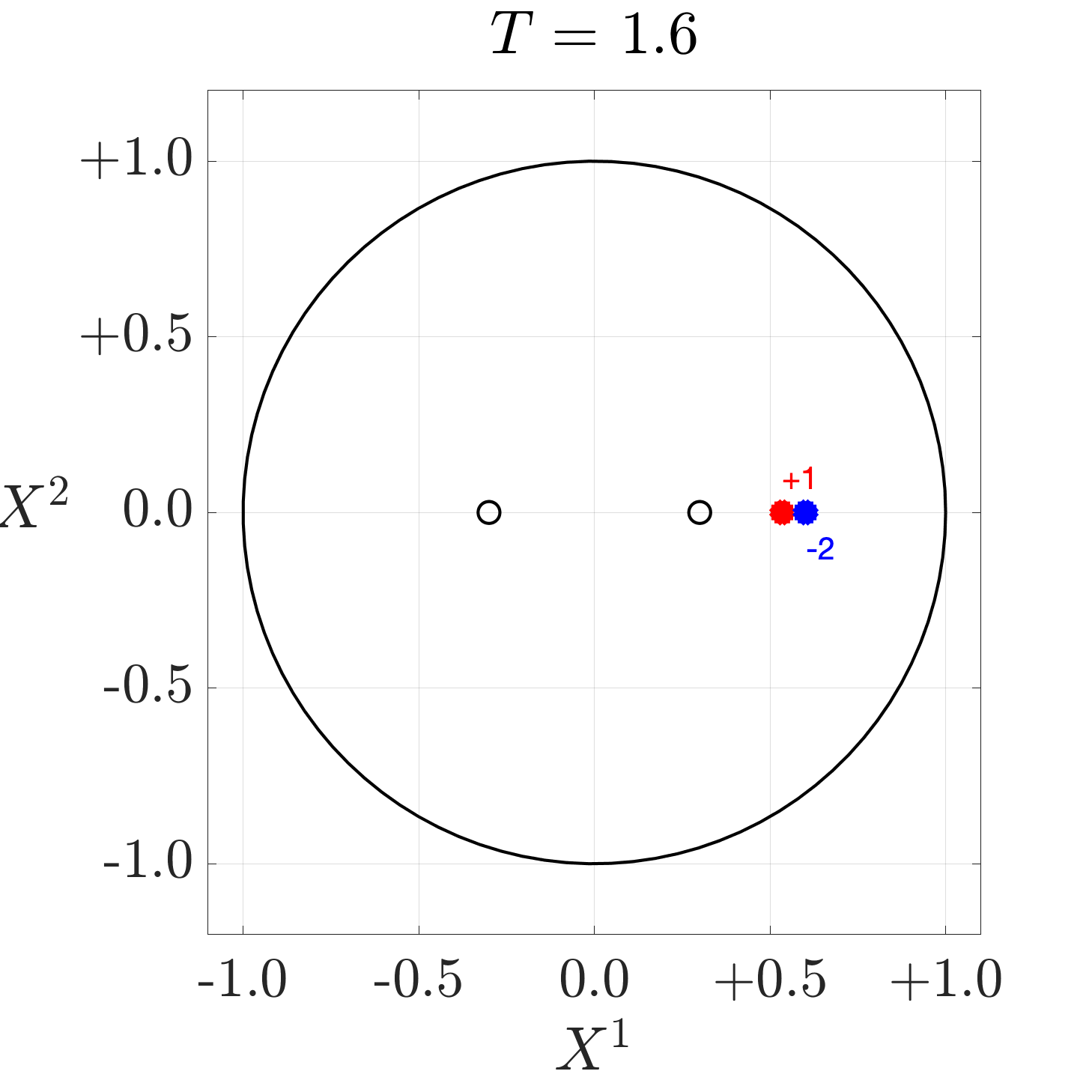}
    \end{minipage}\hfill
    \begin{minipage}{0.32\textwidth}
        \centering
        \includegraphics[width=\textwidth]{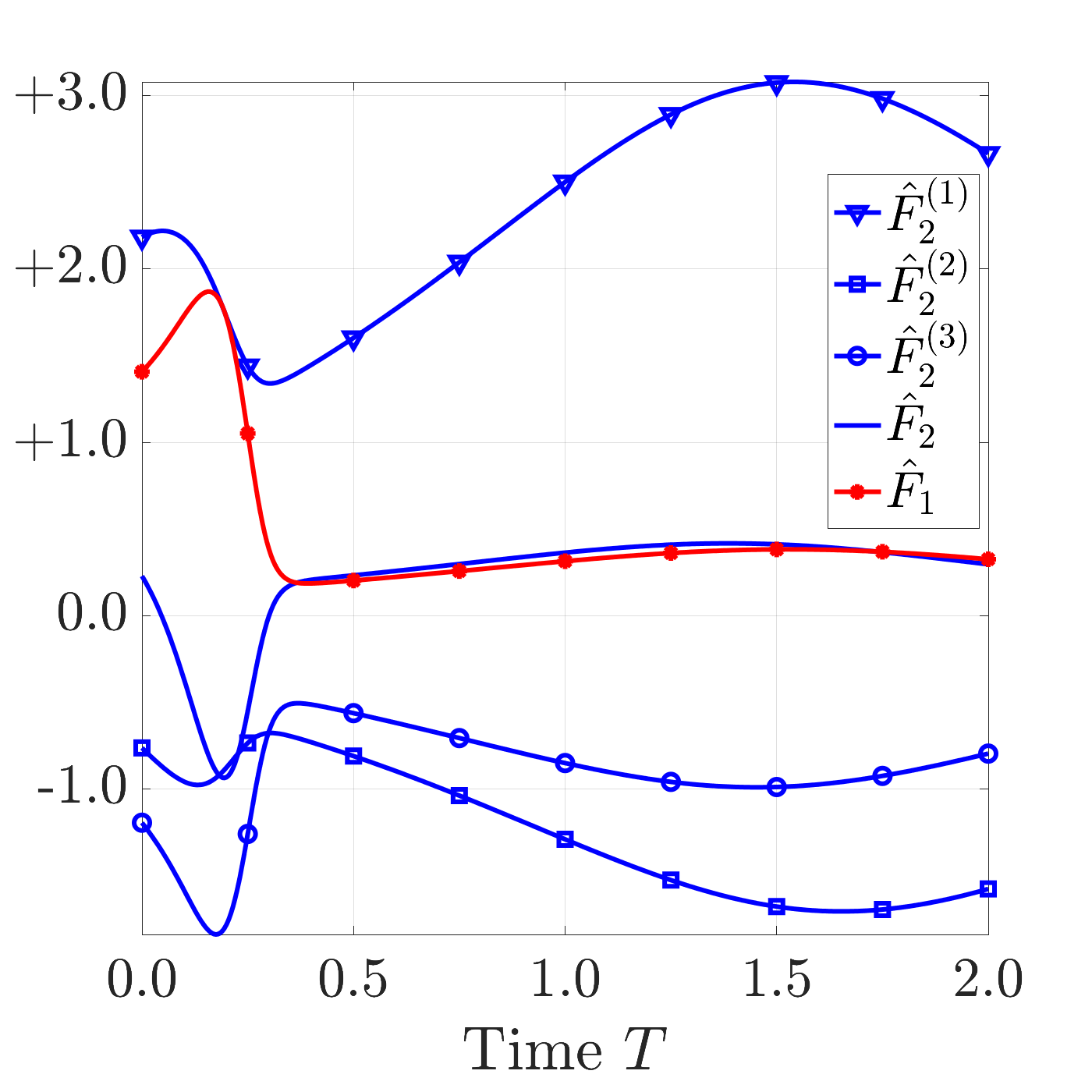}
    \end{minipage}
    
    \caption{(Left and center) Configurations at $T = 0.2$ and $T = 1.6$ for benchmark \#1. The hollow circles refer to $T=0$. (Right) Time trend of  $\hat{F}_{1}$, $\hat{F}_{2}$ and its summands.}
    \label{fig_dipolo}
\end{figure}

\paragraph{\#2. Superposition breaking due to the unbalance of the Frank angles (Figure \ref{fig_superposition_equivalent})} We solve again \eqref{eq_CauchyProblem_Adimensionalized_N2} in the case in which two disclinations with different Frank angles are placed in the same point at the initial time, and are not constrained to evolve with persisting superposition (see Remark \ref{rem_LimitCase_2}). The simulations confirm the predictions of Remark \ref{rem_LimitCase_2}, since they naturally tend to separate, so that the inequality $\Xi_{1}(T)\neq \Xi_{2}(T)$ for $T>0$ holds true. This is because the difference in the Frank angles produces different interactions with $\partial B_{1}(0)$, as visualized in Figure \ref{fig_superposition_equivalent}. 

\begin{figure}[htbp]
    \centering
    \begin{minipage}{0.32\textwidth}
        \centering
        \includegraphics[width=\textwidth]{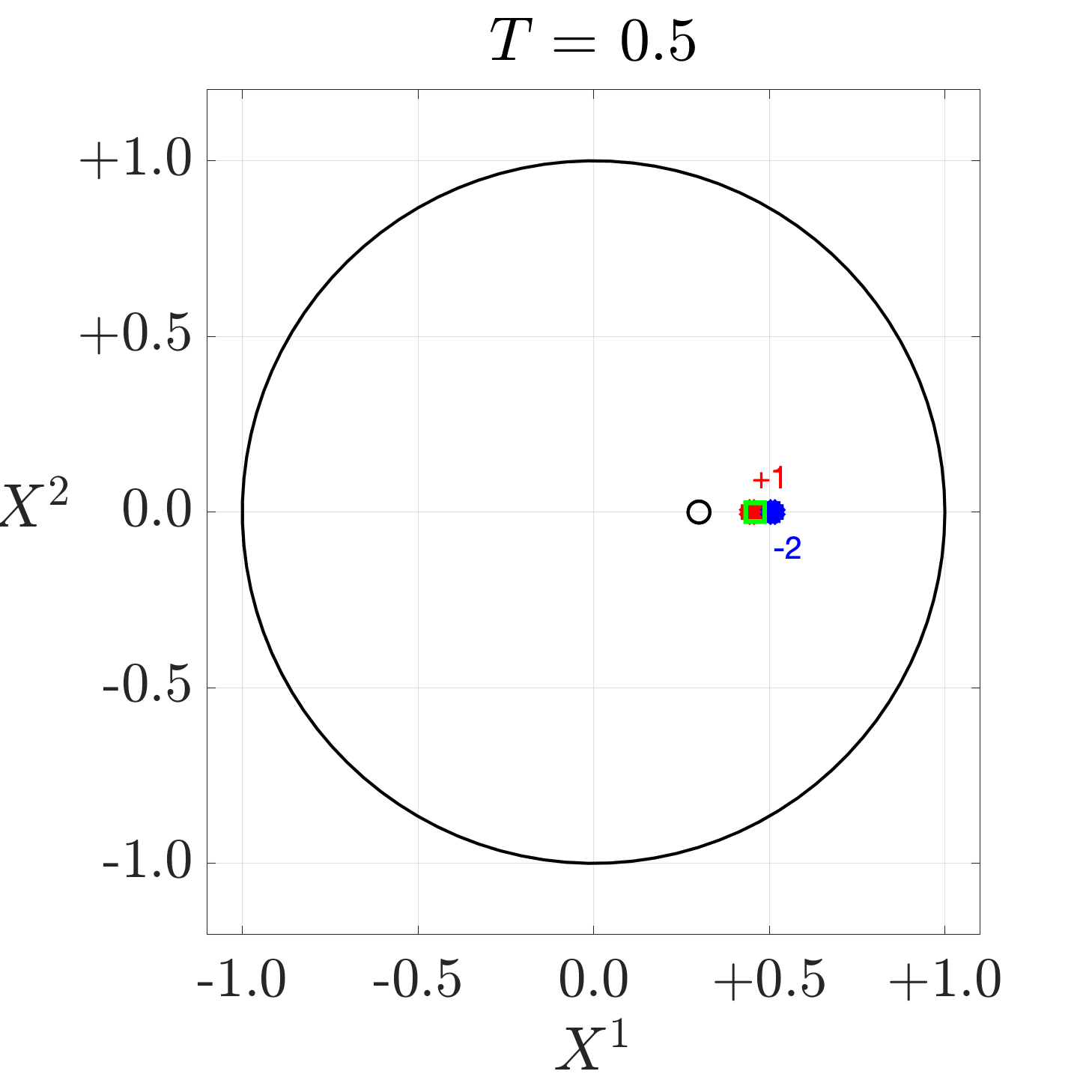}
    \end{minipage}\hfill
    \begin{minipage}{0.32\textwidth}
        \centering
        \includegraphics[width=\textwidth]{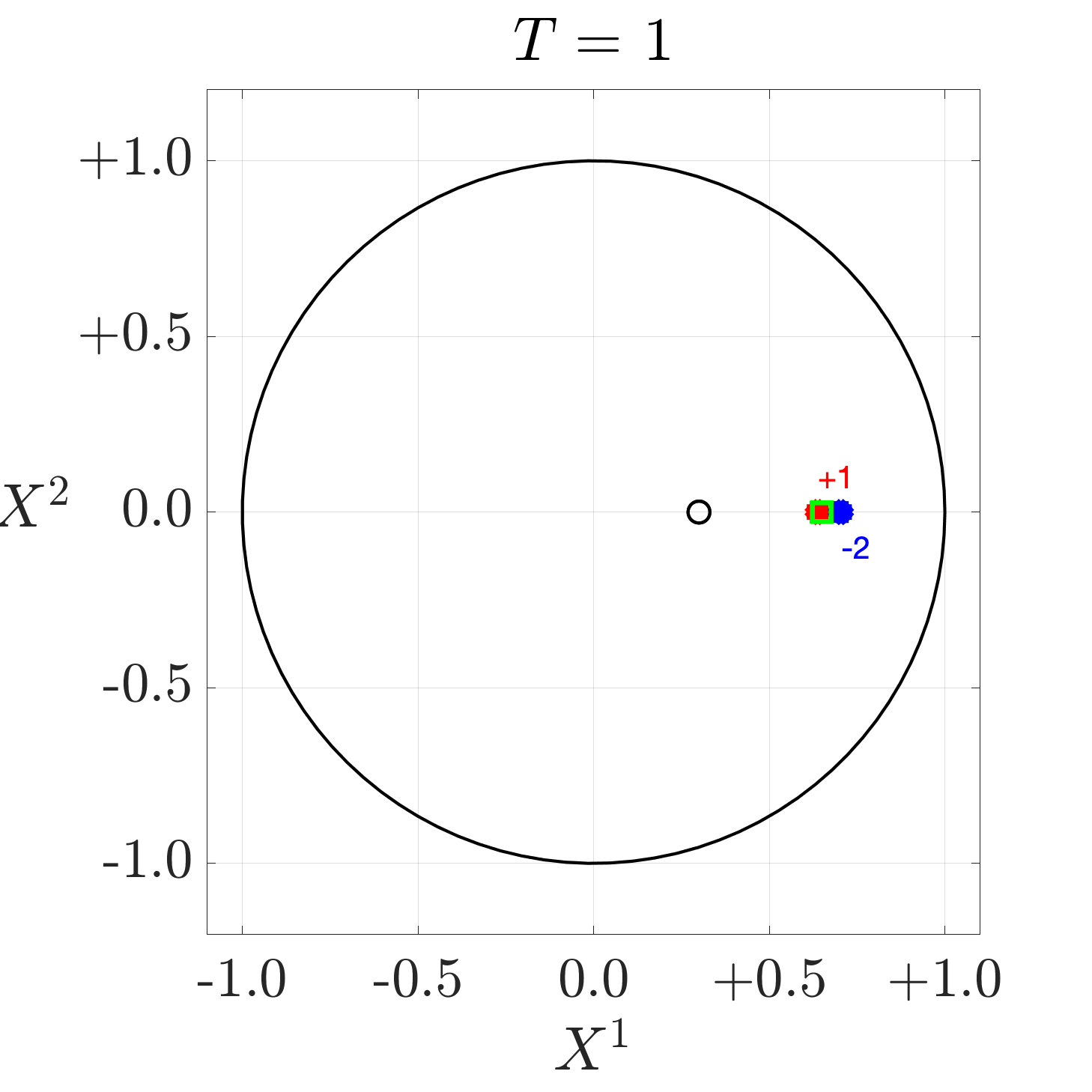}
    \end{minipage}\hfill
    \begin{minipage}{0.32\textwidth}
        \centering
        \includegraphics[width=\textwidth]{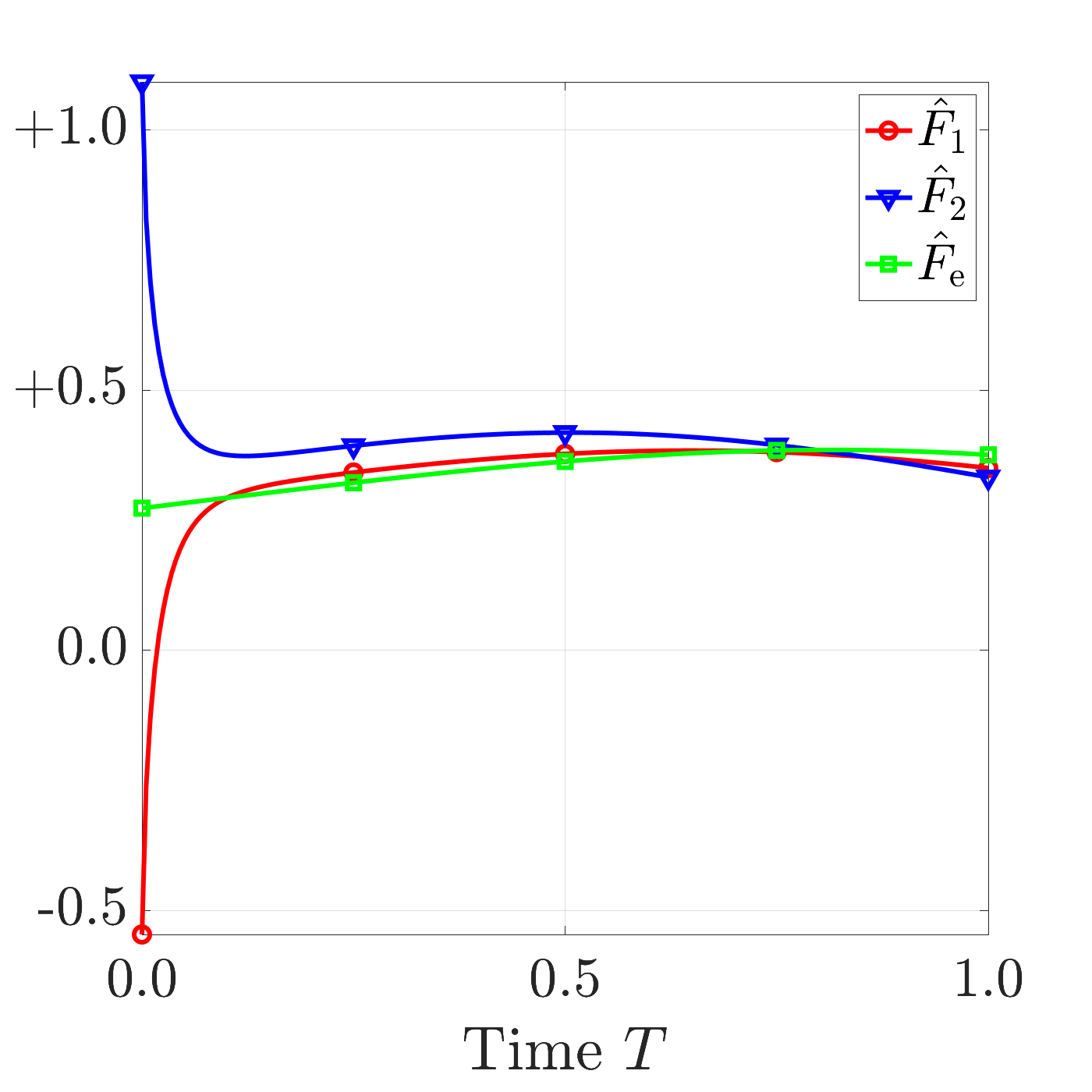}
    \end{minipage}
    
    \caption{(Left and center) Configurations at $T = 0.5$ and $T=1$ for benchmark~\#2. The hollow circles refer to $T=0$; the hollow square refers to the associated ``effective'' disclination (\eqref{eq_effective_equations} and Remark \ref{rem_LimitCase_2}). (Right) Time trend of $\hat{F}_{1}$, $\hat{F}_{2}$ and $\hat{F}_{\mathrm{e}}$.}
    \label{fig_superposition_equivalent}
\end{figure}

\paragraph{\#3. Superposition breaking due to a third disclination (Figure \ref{fig_superposition})} We consider the case of three disclinations, hereafter denoted by $D_{1}$, $D_{2}$, and $D_{3}$, so that $s_{1} = +1$, $s_{2} = -1$, and $s_{3} = +1$. Moreover, we assume $D_{1}$ and $D_{2}$ to be initially superposed, so that their initial common position is, for example,  $\Xi_{10} = \Xi_{20}= 0.3$, and we further set $\Xi_{30} = -0.3$. To investigate this situation, we solve \eqref{eq_CauchyProblem_Adimensionalized} for $N=3$. As anticipated in Section \ref{sec_unfixing}, the presence of $D_{3}$ breaks the initial superposition of $D_{1}$ and $D_{2}$ by attracting $D_{2}$ and repelling $D_{1}$ due to the signs of the considered Frank angles, thereby \emph{splitting} $D_{1}$ and $D_{2}$. However, as $D_{3}$ approaches the boundary, these interactions tend to vanish, thereby making $D_{1}$ and $D_{2}$ approach each other.

\begin{figure}[htbp]
    \centering
    \begin{minipage}{0.32\textwidth}
        \centering
        \includegraphics[width=\textwidth]{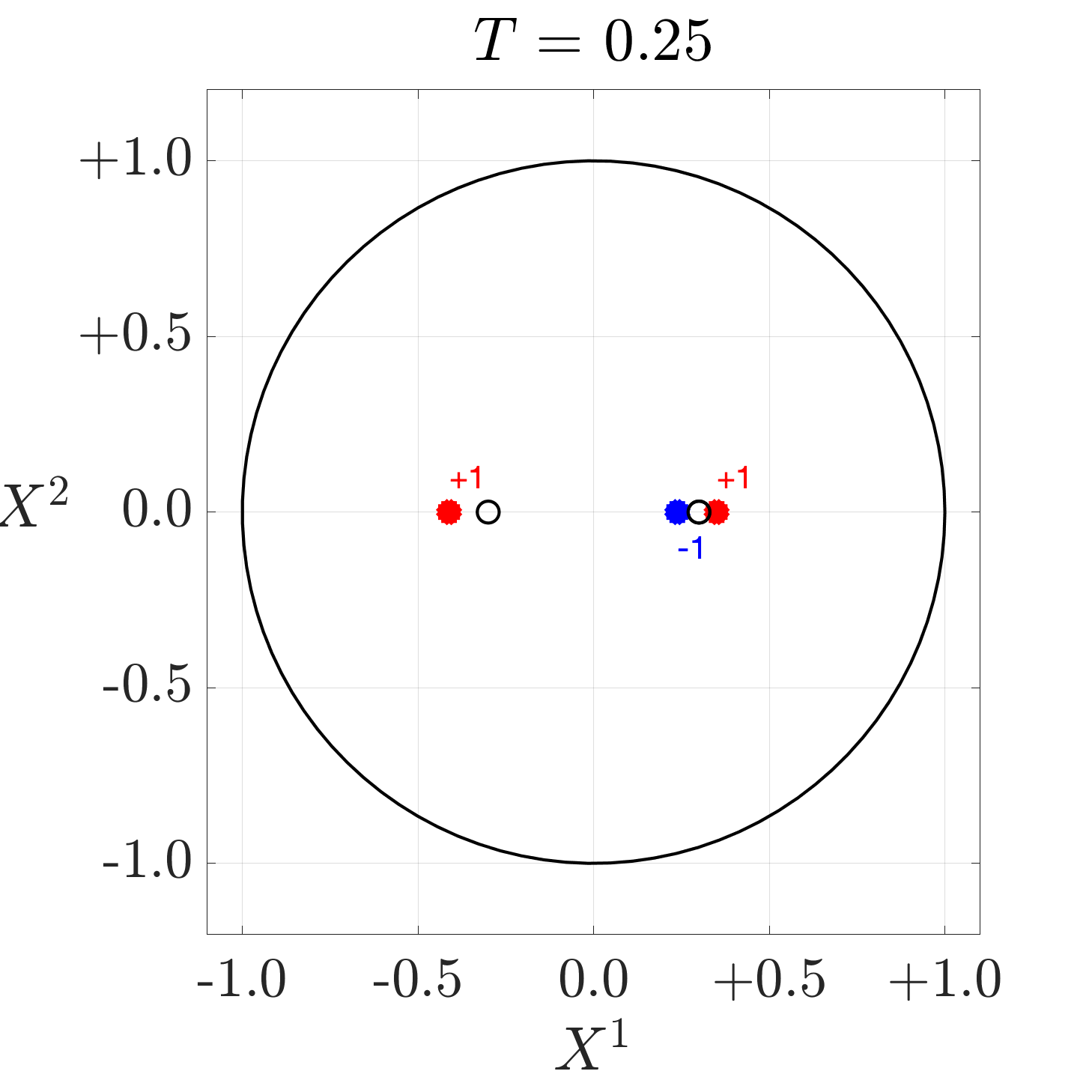}
    \end{minipage}\hfill
    \begin{minipage}{0.32\textwidth}
        \centering
        \includegraphics[width=\textwidth]{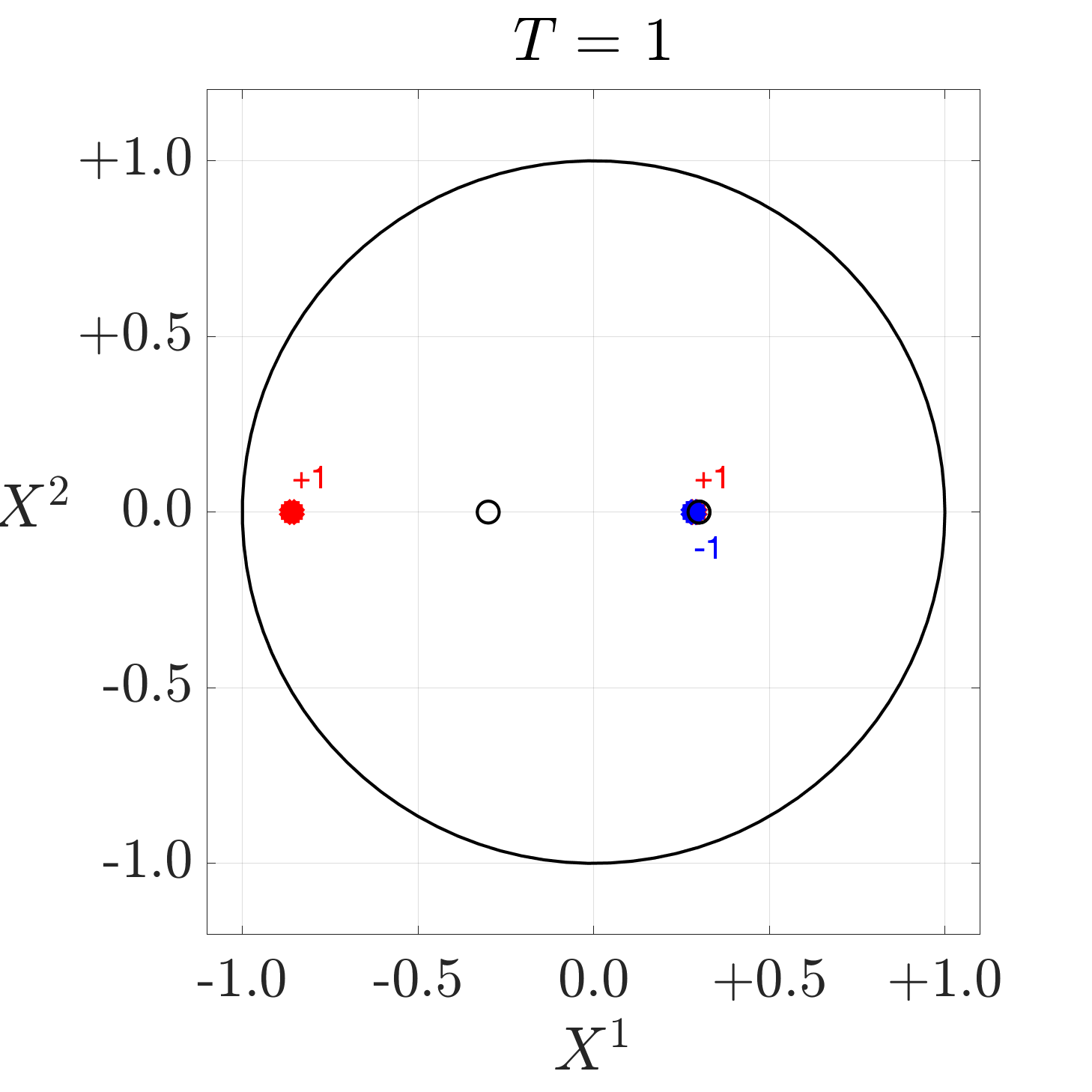}
    \end{minipage}\hfill
    \begin{minipage}{0.32\textwidth}
        \centering
        \includegraphics[width=\textwidth]{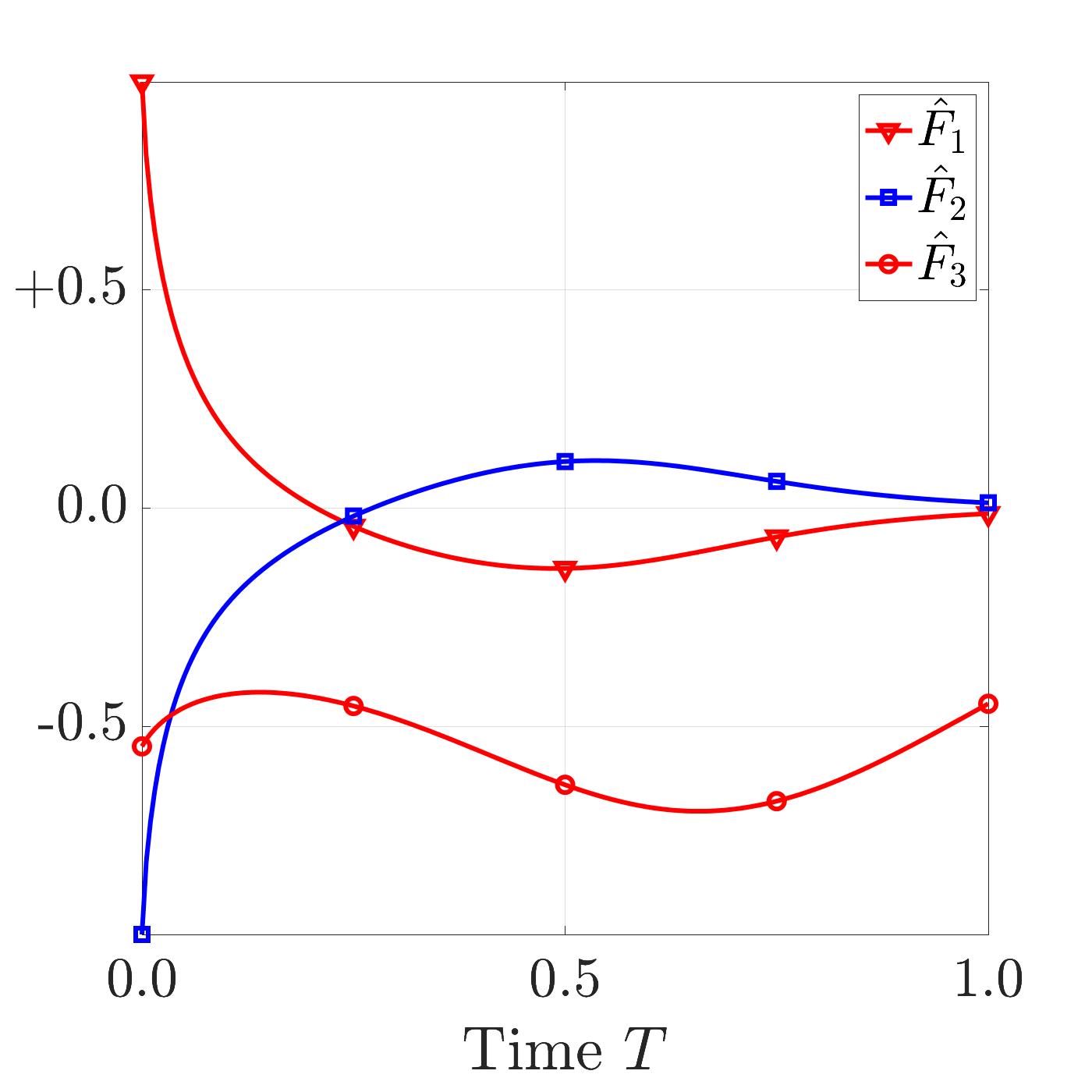}
    \end{minipage}
    
    \caption{(Left and center) Configurations at $T=0.25$ and $T=1$ for benchmark \#3. The hollow circles refer to $T=0$. (Right) Time trend of the forces $\hat{F}_{1}$,  $\hat{F}_{2}$ and $\hat{F}_{3}$.}
    \label{fig_superposition}
\end{figure}

\paragraph{\#4. Three disclinations at the vertices of an equilateral triangle (Figure \ref{fig_tripolo})} We study the case of the three disclinations $D_{1}$, $D_{2}$, and $D_{3}$, initially placed at the vertices of an equilateral triangle centered at the origin of $B_{1}(0)$, and with sides of non-dimensional length $L_{0} = 0.48$, as shown in Figure \ref{fig_tripolo}. We solve \eqref{eq_CauchyProblem_Adimensionalized} by assuming the Frank angles of the three disclinations to be $s_{1} = s_{3} = +1$ and $s_{2} = -1$, so that the total Frank angle of the system is $s_{1} + s_{2} + s_{3} = +1$. We refer to this scenario as \emph{triplet} of disclinations. In a small right neighborhood of $T=0$, the disclinations $D_{1}$ and $D_{3}$ repel each other (they have Frank angles of the same sign). In doing this, they do not move along the segment $\overline{\Xi_{10}\Xi_{30}}$ connecting their initial positions. Rather, they tend to move towards $D_{2}$. In the meanwhile, $D_{2}$ moves towards the middle point of $\overline{\Xi_{10}\Xi_{30}}$, since the attractive forces acting on $D_{2}$ due to $D_{1}$ and $D_{3}$ predominate over the influence of the boundary on $D_{2}$. However, as $D_{2}$ gets closer to the other two disclinations, $D_{1}$ and $D_{3}$ tend to align again with $D_{2}$. This occurs around time $T=5$, as shown in the second panel of Figure \ref{fig_tripolo}. Afterwards, the three disclinations move towards the boundary by placing themselves at the vertices of a triangle that is maintained by the balance among the repulsive force characterizing the pair $(D_{1}, D_{3})$ and the attractive forces characterizing the pairs $(D_{1}, D_{2})$ and $(D_{3}, D_{2})$. We find this result interesting because the disclinations $D_{1}$ and $D_{3}$ tend to migrate towards the boundary very close to each other in spite of the repulsive forces between them. This is made possible by the presence of $D_{2}$, which, having opposite Frank angle, mitigates their mutual repulsion. To us, this behavior is reminiscent of Cooper pairs in superconductors \cite{Fellsager1998}, although the physics of our problem is very different.

\begin{figure}[htbp]
    \centering
    \begin{minipage}{0.32\textwidth}
        \centering
        \includegraphics[width=\textwidth]{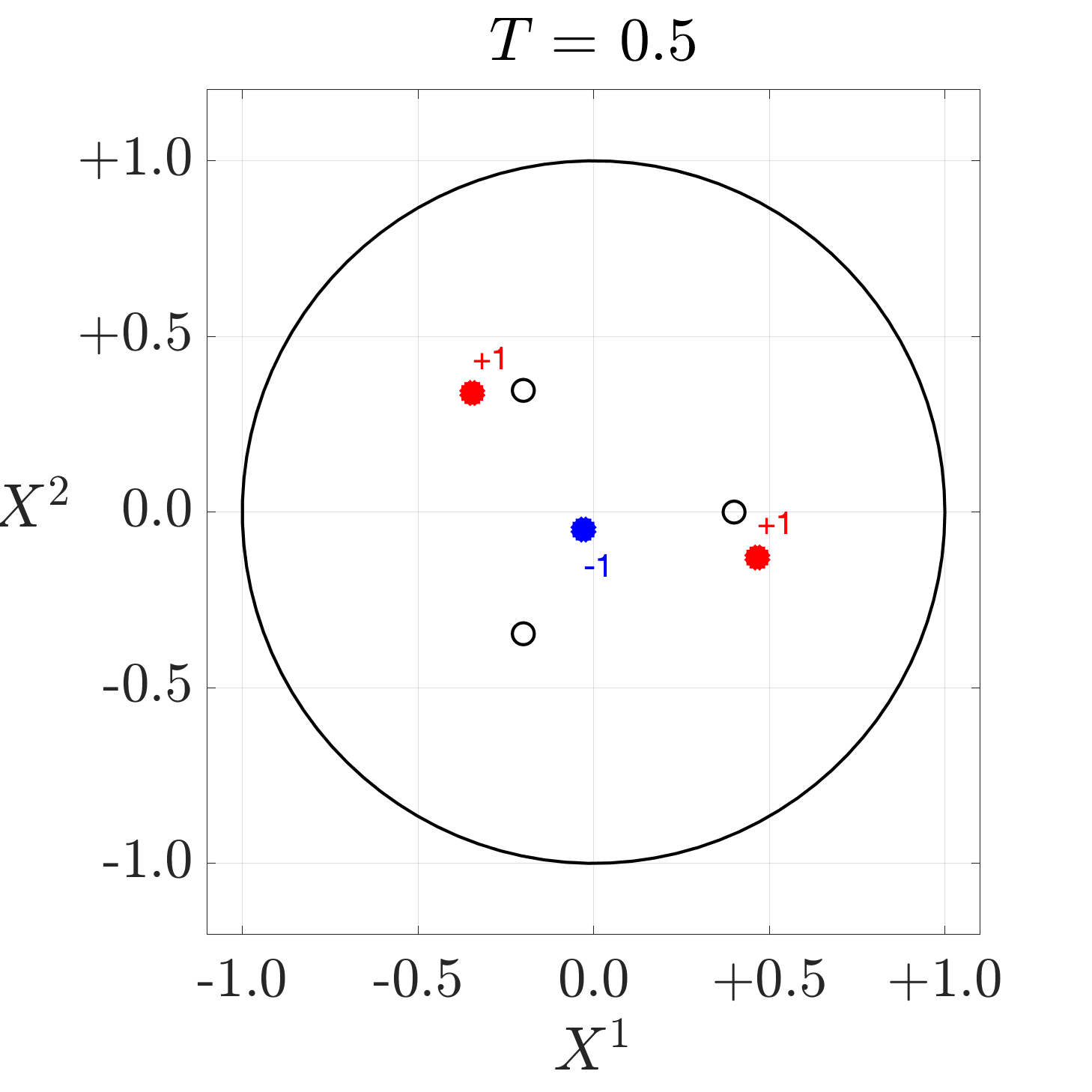}
    \end{minipage}\hfill
    \begin{minipage}{0.32\textwidth}
        \centering
        \includegraphics[width=\textwidth]{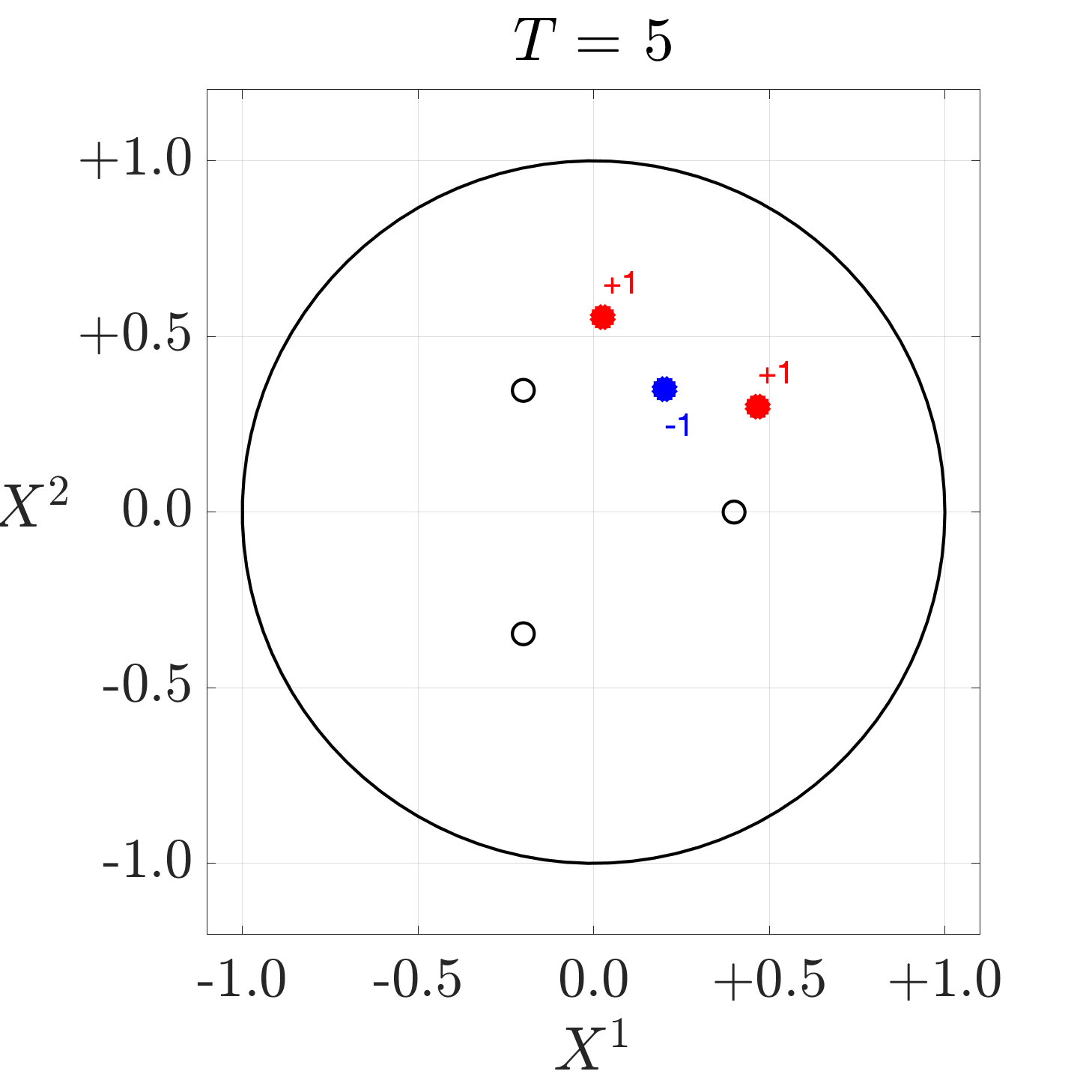}
    \end{minipage}\hfill
    \begin{minipage}{0.32\textwidth}
        \centering
        \includegraphics[width=\textwidth]{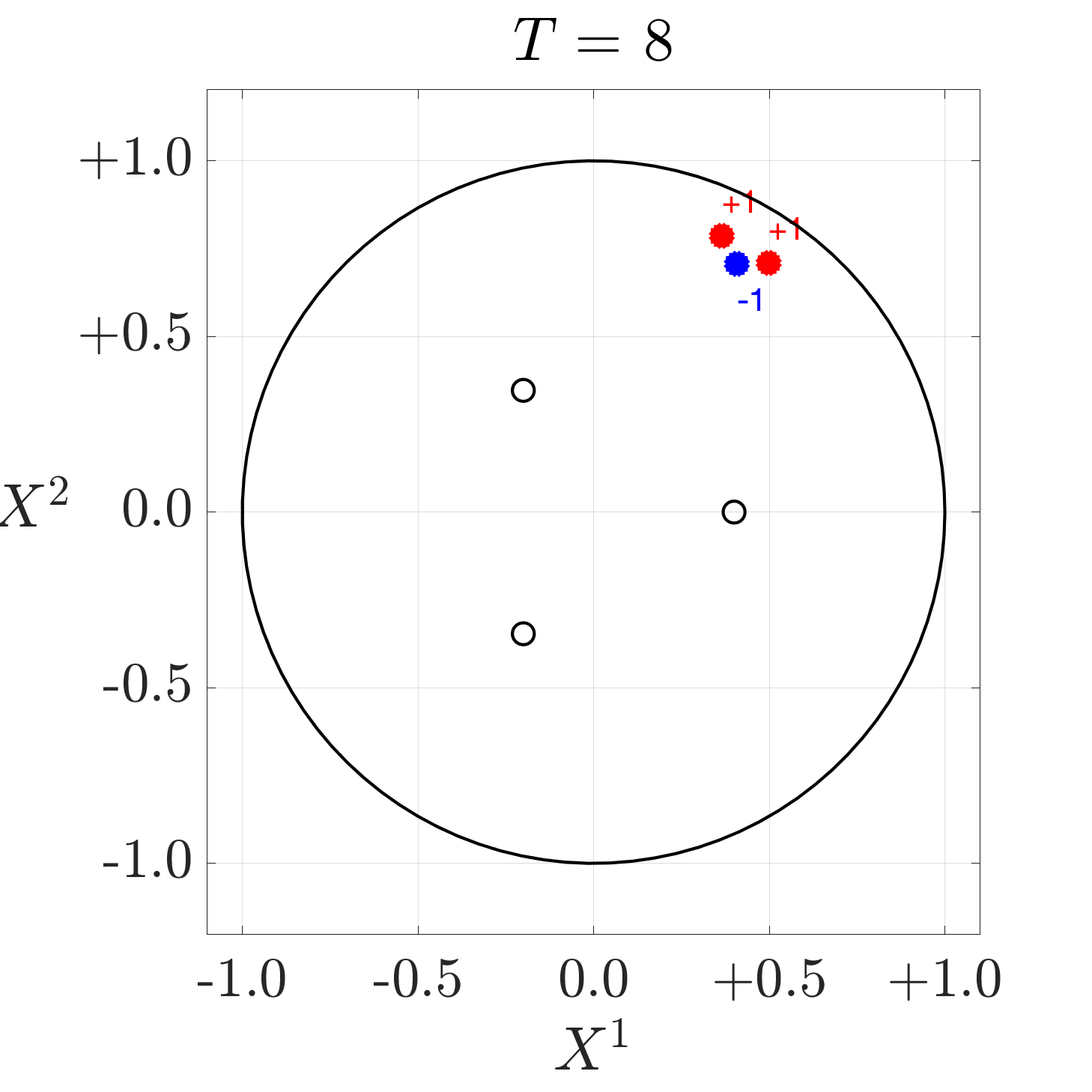}
    \end{minipage}
    \caption{Configurations at $T = 0.5$, $T = 5$ and $T = 8$ for benchmark \#4. The hollow circles refer to $T=0$.}
    \label{fig_tripolo}
\end{figure}

\paragraph{\#5. Seven disclinations with non-zero total Frank angle (Figure \ref{fig_7polo})} As a final scenario for this work, we present the case of $N=7$ disclinations, initially distributed as the vertices of a regular heptagon centered in the origin. For each $k = 1,\ldots, 7$, the Frank angle $s_{k}$ of the $k$th disclination reads $s_{k} = (-1)^{k-1}$, thereby making the total Frank angle of the system to be equal to $s_{1}+\ldots+s_{7} = +1$. Also for this problem, we solve \eqref{eq_CauchyProblem_Adimensionalized}, for $N=7$. In our opinion, this benchmark is remarkable since, by virtue of the \emph{superposition principle}, the system's evolution is characterized by two phenomena that we have individually addressed in this work. In fact, as shown in Figure \ref{fig_7polo}, the first phenomenon that can be observed is that a \emph{triplet} of disclinations is formed in the opposite side to where the two disclinations having Frank angles of the same sign were initially located. In particular, the \emph{triplet} moves towards the boundary as presented in the paragraph above. The second phenomenon depicted in Figure \ref{fig_7polo} is that the remaining four disclinations naturally divide in two pairs that tend to \emph{collide} in the limit for $T\to+\infty$ as presented in Section \ref{sec_TwoDisclinations}.

\begin{figure}[htbp]
    \centering
    \begin{minipage}{0.32\textwidth}
        \centering
        \includegraphics[width=\textwidth]{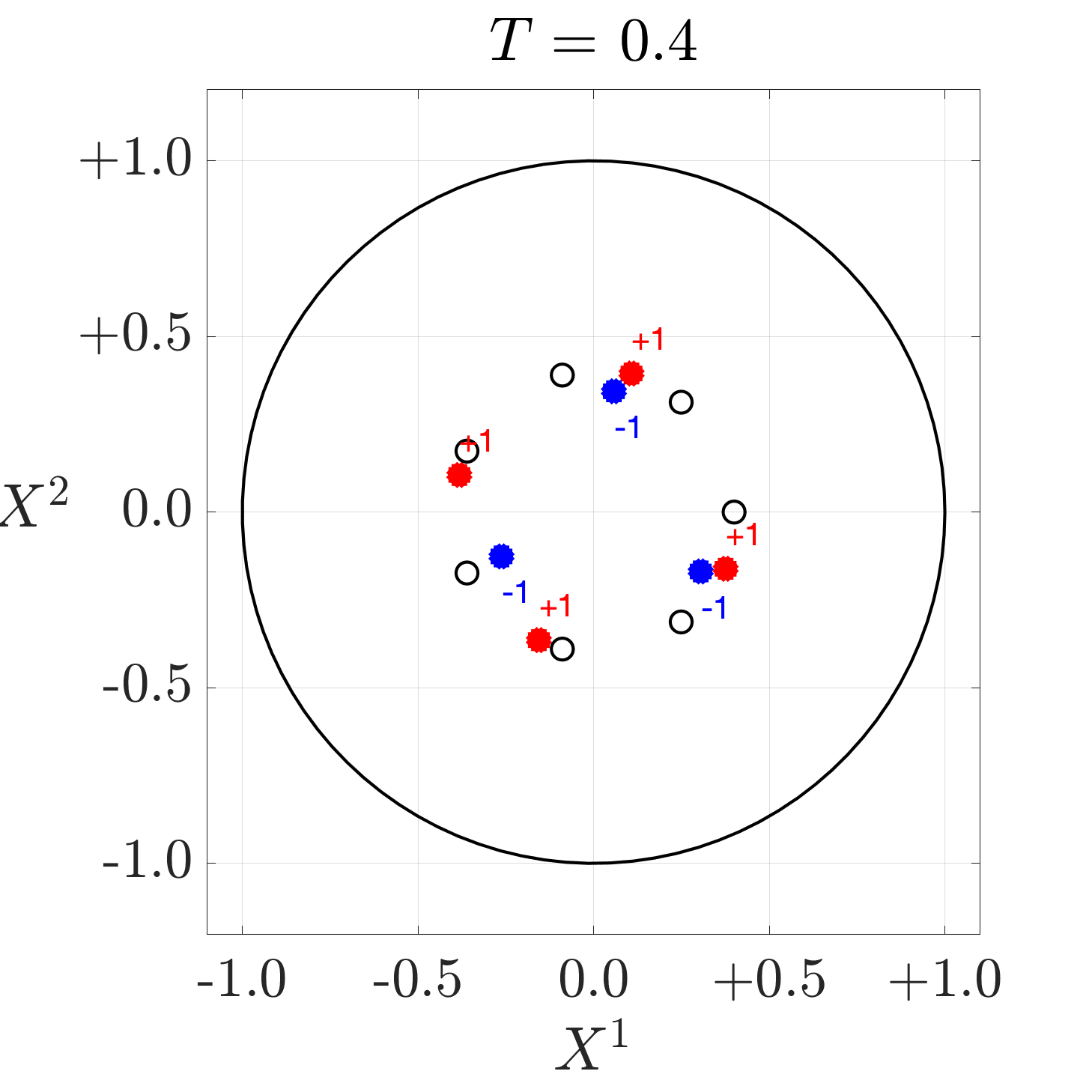}
    \end{minipage}\hfill
    \begin{minipage}{0.32\textwidth}
        \centering
        \includegraphics[width=\textwidth]{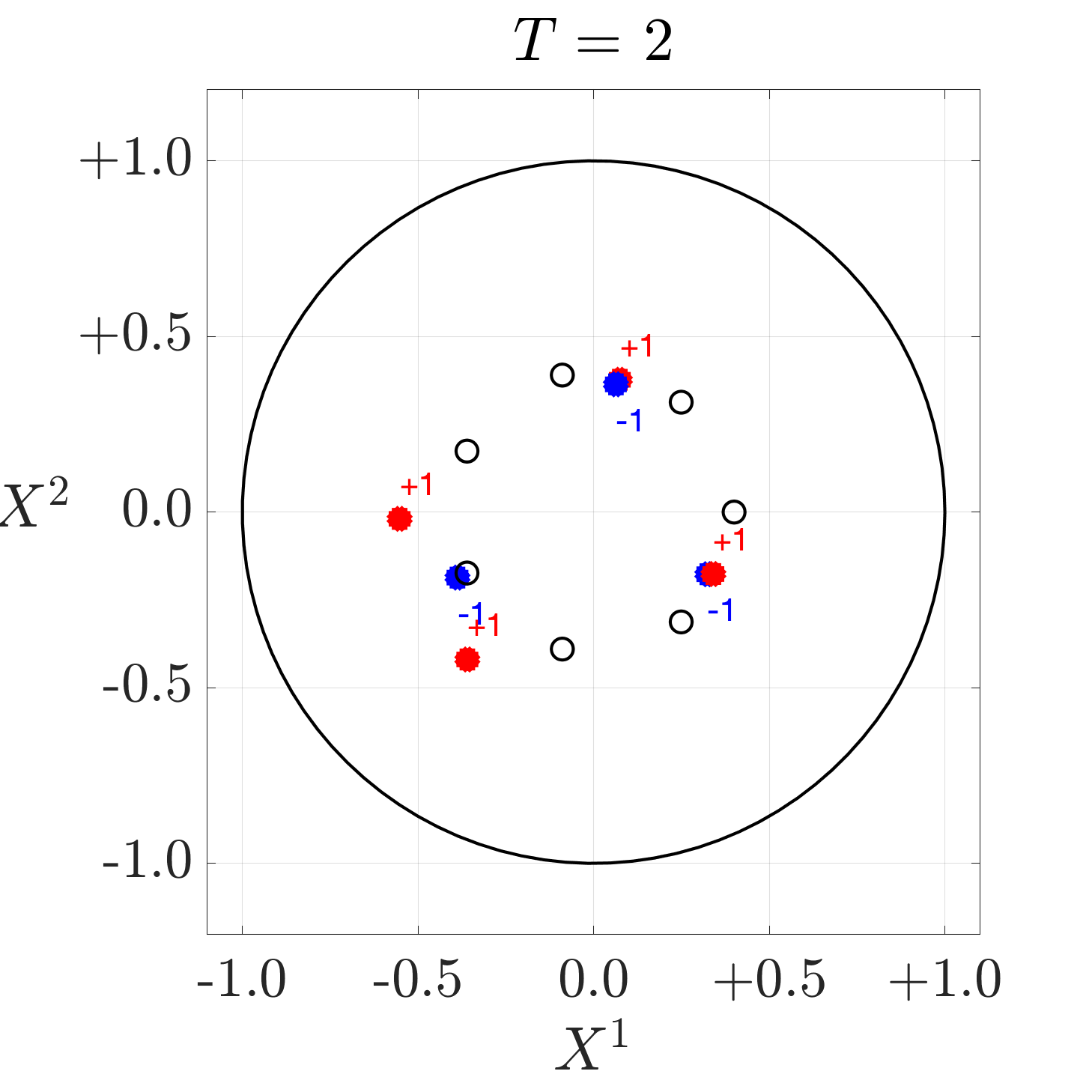}
    \end{minipage}\hfill
    \begin{minipage}{0.32\textwidth}
        \centering
        \includegraphics[width=\textwidth]{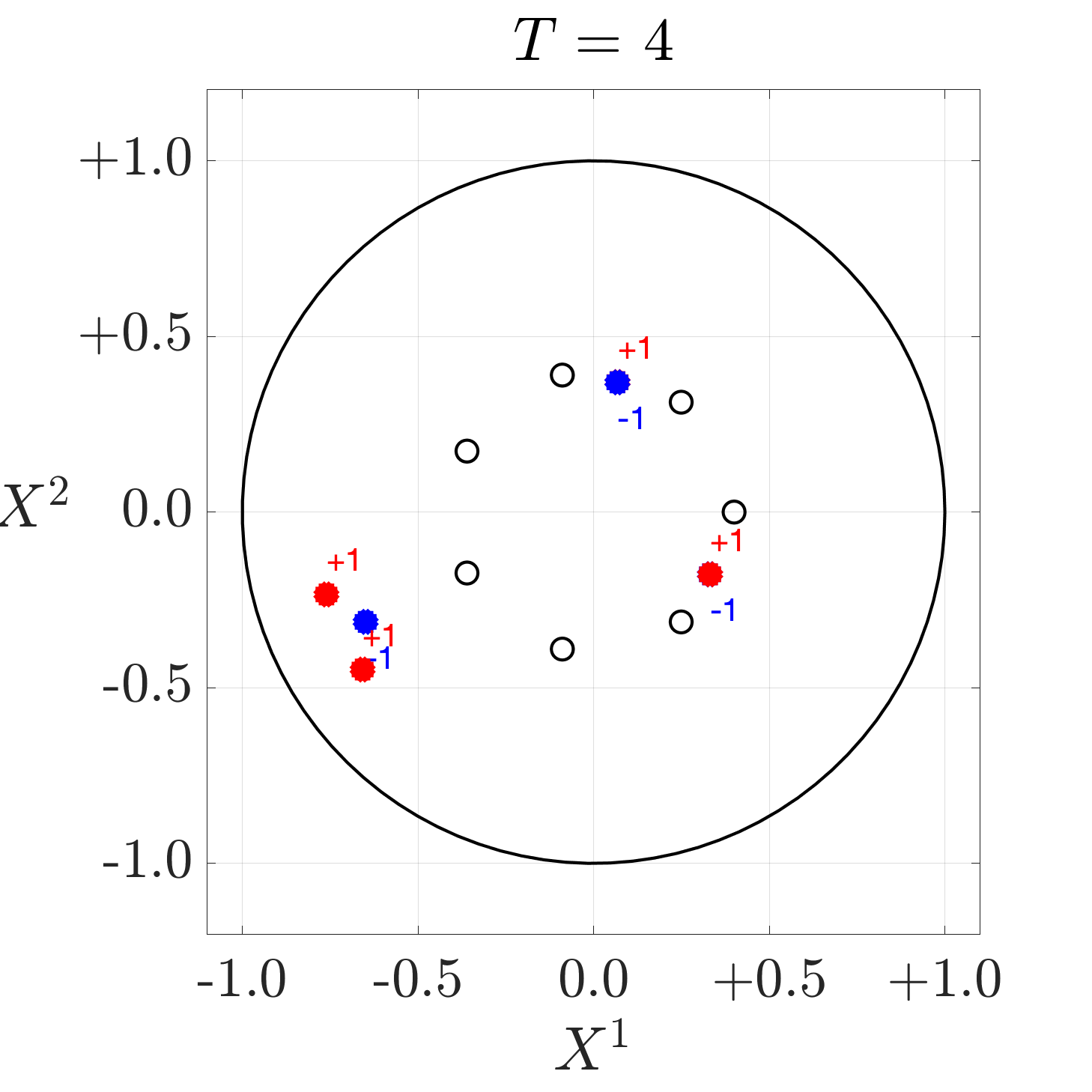}
    \end{minipage}
    \caption{Configurations at $T = 0.4$, $T = 2$ and $T = 4$ for benchmark \#5. The hollow circles refer to $T=0$.}
    \label{fig_7polo}
\end{figure}

\bigskip


\noindent\textbf{Acknowledgments} 
{JSPS Innovative Area grant JP21H00102 and JSPS Grant-in-Aid for Scientific Research (C) JP24K06797 (PC).
PC holds an honorary appointment at La Trobe University and is a member of GNAMPA. 
European Union—Next Generation EU and the Research Projects PRIN2022 PNRR of National Relevance on \emph{Innovative multiscale approaches, possibly based on Fractional Calculus, for the effective constitutive modeling of cell mechanics, engineered tissues, and metamaterials in Biomedicine and related fields} (P2022KHFNB) (AG) and on \emph{Geometric-Analytic Methods for PDEs and Applications} (2022SLTHCE) (MM). This manuscript reflects only the authors’ views and opinions and the Italian Ministry cannot be considered responsible for them.
JASSO scholarship offered within the “Kyushu University Program for Emerging Leaders in Science” (Q‐PELS) and local support received at the Institute of Mathematics for Industry, an international Joint Usage and Research facility located at Kyushu University (AP).
}

\bibliographystyle{siamplain}
\bibliography{Bibliography}
\end{document}